\numberwithin{equation}{section}
\newtheorem{Theorem}{Theorem}[section]
\newtheorem{Proposition}[Theorem]{Proposition}
\newtheorem{Lemma}[Theorem]{Lemma}
\newtheorem{Corollary}[Theorem]{Corollary}
\newtheorem{Definition}[Theorem]{Definition}
\newtheorem{Remark}[Theorem]{Remark}
\newtheorem{Example}[Theorem]{Example}
\newtheorem{Notation}[Theorem]{Notation}
\numberwithin{equation}{section}
\newcommand{\rank}{\operatorname{rank}}
\newcommand {\ZZ}{\mathbb{Z}}
\begin{document}

\title[Clifford's Theorem for Coherent Systems]{Clifford's Theorem for Coherent Systems on surfaces}

\author{L. Costa}
\address{Facultat de Matem\`atiques i Inform\`atica,
Departament de Matem\`atiques i Inform\'atica, Gran Via 
585, 08007 Barcelona, SPAIN } \email{costa@ub.edu}
\thanks{$^*$ Partially supported by PID2020-113674GB-I00.}

\author{I. Mac\'ias Tarr\'io}
\address{Facultat de Matem\`atiques i Inform\`atica,
Departament de Matem\`atiques i Inform\`atica, Gran Via
585, 08007 Barcelona, SPAIN} \email{irene.macias@ub.edu}
\thanks{$^{**}$ Partially supported by PID2020-113674GB-I00.}

\author{L. Roa-Leguizam\'on}

\address{Universidad Antonio Nariño, Departamento de matem\'aticas \newline Calle 58A Bis  37 - 94, Bogota, Colombia.}

\email{leonardo.roa@cimat.mx}

\subjclass[2010]{14H60, 14D20}

\keywords{Coherent Systems on surfaces, Clifford's Theorem.}

\date{\today}

\baselineskip=16pt

\begin{abstract}
Let $X$ be a smooth irreducible projective surface. The aim of this paper is to establish a version 
 of Clifford’s theorem for coherent systems on $X$. 
\end{abstract}

\maketitle

\section{Introduction}

Let $X$ be a smooth irreducible projective variety. A coherent system on $X$ is a pair $(\mathcal{E},\mathcal{V})$ where $\mathcal{E}$ is a coherent sheaf
 on $X$,  and
$\mathcal{V} \subseteq H^{0}(X,\mathcal{E})$. These objects were introduced in the 90's by Le Potier in \cite{Potier}. Associated to coherent systems there is a notion of stability which allows the construction of the moduli spaces. Many interesting results have been proved regarding these moduli spaces
when the underlying variety is a curve (see for instance \cite{Bradlow, Bradlow-Garcia-Prada, Bradlow-Garcia-Prada-Mercat-Munoz-Newstead, Bradlow-Garcia-Prada-Munoz-Newstead1}), but very little is known if the variety has
dimension greater than or equal to two.

There is a useful relation between
moduli spaces of stable coherent systems and the Brill-Noether loci inside  moduli spaces of semistable bundles with  fixed  rank and Chern classes. Roughly speaking, a $k$-Brill-Noether subvariety is a subvariety of the moduli space of semistable vector bundles whose points correspond to bundles having at least $k$ independent global sections.  The main goal of Brill-Noether theory is the study of these subvarieties. In fact, Brill-Noether theory focuses  in  questions concerning non-emptiness, connectedness, irreducibility, dimension, singularities, topological and geometric structures, among others, of theses subvarieties. For line bundles on curves (Classical Brill-Noether theory), many of these questions have been answered when the curve is generic  (see \cite{Arbarello-Cornalba-Griffiths-Harris}) but, in spite of the increasing interest in recent years,  much less is known about vector bundles of higher rank on curves, surfaces, 3-folds and on varieties of higher dimension (see for instance  \cite{G-T, Costa-Miro, ALH}, among others).

On the other hand, Clifford's theorem for semistable bundles $E$ on curves or on surfaces determines an upper bound for the number of independent sections of $E$ in terms of their invariants and of the variety. Hence, Clifford's theorem can be seen as a first step in order to determine when Brill-Noether loci are nonempty. Since coherent systems have played an important role in the development of the Brill-Noether theory, it is natural to look for some sort of Clifford's theorem for coherent systems. The first to consider this point were  Lange and Newstead in the context of coherent systems on curves (\cite{Lange-Newstead}). In fact,  
 Clifford's theorem for coherent systems on curves states that, for any $\alpha-$semistable coherent system $(E,V)$ of type $(n,d,k)$ where $n=\text{rank}(E)$ , $d=\deg(E)$ and  $k=\dim(V)$ on a smooth projective curve $C$ of genus $g \geq 2$, we have
 \[ k \leq \begin{cases}
     \frac{d}{2}+n, \,\,\,\,\,\,\,\,\,\,\,\,\,\,\,\,\,\,\,\,\,\,\,\,\,\,\,\,\,\,\,\,\text{if} \,\,\,\,\,\,\,\, 0 \leq d \leq 2gn ,\,\,\,\,\,\,\, \text{and}\\
     d+n(1-g) \,\,\,\,\,\,\,\,\,\,\,\,\,\,\,\,\,\text{if} \,\,\,\,\,\,\,\,  d \geq 2gn. \,\,\,\,\,\,\,
 \end{cases}\]
 \noindent (See \cite[Theorem 2.1]{Lange-Newstead}).

 The aim of this  paper is to determine  a Clifford's theorem for coherent systems on a smooth irreducible surface. More precisely we will prove the following result:

\noindent \textbf{Theorem}
Let $X$ be a smooth projective surface, denote by $K_X$ its canonical divisor and let $H$ be an ample divisor on $X$ such that $K_X\cdot H\leq0$.
    Let $(E,V)$ be an $\alpha-$semistable coherent system on $X$ for some $\alpha \in \mathbb{Q}[m]_{>0}$. Let $a$ be an integer such that
\[\deg(X)\cdot\max\left\{\frac{\rank(E)^2-1}{4},1 \right\} < \frac{\binom{a+2}{2}-a-1}{a}.\] If \[0\leq \frac{c_1(E) H}{\rank(E)}<aH^2+K_X H,\] 
then 
    \[\dim(V) \leq \rank(E)+a\cdot \frac{c_1(E) H}{2}.\]

Next we outline the structure of this paper. In Section 2, we recall the basic facts on coherent systems and we prove technical results about semistable coherent systems that we will need later.  In Section 3, we establish the Clifford's theorem for coherent systems on irreducible smooth surfaces and we end the section with some applications. Through all the paper we will work over an algebraically closed field of characteristic 0 and $X$ is a smooth irreducible projective surface.  

\section{Coherent Systems}

  We start the section with a brief summary  on coherent systems on surfaces (for further treatment of the subject see \cite{Potier, He}). Then, we will get some results concerning semistable coherent systems that will be key ingredients to obtain our main result.

\begin{Definition}  Let $X$ be a smooth irreducible projective surface. 
    \begin{enumerate}
    \item[(1)] A \it{coherent system of dimension $d$ on $X$} is a pair $(\mathcal{E},\mathcal{V})$ where $\mathcal{E}$ is a coherent sheaf
 of dimension $d$ on $X$,  and
$\mathcal{V} \subseteq H^{0}(X,\mathcal{E})$.
\item[(2)] A morphism of coherent systems $(\mathcal{E}_1,\mathcal{V}_1) \to (\mathcal{E}_2,\mathcal{V}_2)$ is a morphism of coherent sheaves $\phi: \mathcal{E}_1 \to \mathcal{E}_2$ such that $H^0(\phi)(\mathcal{V}_1) \subset \mathcal{V}_2$.
\item[(3)]  A coherent subsystem of $(\mathcal{E},\mathcal{V})$ is a pair $(\mathcal{F},\mathcal{W})$ 
where $0 \neq \mathcal{F} \subset \mathcal{E}$ is a proper subsheaf of 
$\mathcal{E}$ and $\mathcal{W} \subseteq \mathcal{V}\cap H^{0}(X,\mathcal{F})$.
\item[(4)] A quotient coherent system  of $(\mathcal{E},\mathcal{V})$ is a coherent system $(\mathcal{G},\mathcal{Z})$ together with a morphism 
 $\phi: (\mathcal{E},\mathcal{V}) \to (\mathcal{G},\mathcal{Z})$ of coherent systems such that the morphism $\phi: \mathcal{E} \to \mathcal{G}$ is surjective and $H^0(\phi)(\mathcal{V})=\mathcal{Z}$.
\end{enumerate}
\end{Definition}

\begin{Remark} \rm
In general, a subsystem $(\mathcal{F},\mathcal{W}) \subset (\mathcal{E},\mathcal{V})$ does not define a quotient system. However, whenever $\mathcal{W}=\mathcal{V}\cap H^0(X,\mathcal{F})$ we have a corresponding 
 quotient system $(\mathcal{G},\mathcal{Z}):= (\mathcal{E}/\mathcal{F},\mathcal{V}/\mathcal{W})$ which fits in the exact sequence 
\[0 \longrightarrow (\mathcal{F},\mathcal{W}) \longrightarrow (\mathcal{E},\mathcal{V}) \longrightarrow (\mathcal{G},\mathcal{Z}) \longrightarrow 0.\]
\end{Remark}

In this paper, we restrict our attention to coherent systems $(\mathcal{E},\mathcal{V})$ of dimension 2, that is, on pairs $(\mathcal{E},\mathcal{V})$  where $\mathcal{E}$ is a torsion-free sheaf. Let us denote by $(E,V)$ the coherent systems with $E$ a torsion-free sheaf.

\begin{Remark} \rm
 If $(E,V)$ is a coherent system with $E$ a torsion-free sheaf, then for any coherent subsystem $(F,W) \subset (E,V)$, $F$ is torsion-free.
\end{Remark}

\begin{Definition} 
A coherent system of type $(n, c_1,c_2, k)$ is a coherent system $(E,V)$ where $E$ is a  rank $n$ torsion-free sheaf with Chern classes
$c_i \in H^{2i}(X,\mathbb{Z})$, for $i=1,2$ and $V \subset H^0(X,E)$ is a subspace of dimension $k$.
\end{Definition}

\begin{Notation} \rm
    For simplicity of notation we will  denote the dimension of a vector space $V$ by the corresponding lowercase letter $v$ and given a coherent torsion-free sheaf $E$ we denote by  $n_E$ its rank. 
\end{Notation}

Associated to the coherent systems there is a notion of stability which depends on a parameter $\alpha \in \mathbb{Q}[m]$. To introduce it, it is convenient to fix some notation.

\begin{Notation} \rm
We denote by $\mathbb{Q}[m]$ the space of polynomials on $m$ with coefficients on  $\mathbb{Q}$. Given a couple of polynomials $p_1,p_2 \in \mathbb{Q}[m]$, we write $p_1 \leq p_2$ if and only if $p_1(m)-p_2(m) \leq 0$ for $m \gg 0$  and we say that $\alpha \in \mathbb{Q}[m]_{>0}$ if $\alpha>0$.  
    \end{Notation}

    \begin{Definition}
   Let   $\alpha \in \mathbb{Q}[m]_{>0}$ and $H$ an ample divisor  on $X$. Given a coherent system $(E,V)$ of type $(n,c_1,c_2,k)$ on $X$, we define its \it{reduced Hilbert polynomial} by
    \begin{eqnarray*}
p^{\alpha}_{H,(E,V)}(m)=\frac{k}{n}\cdot \alpha +\frac{P_{H,E}(m)}{n},
\end{eqnarray*}
where $P_{H,E}(m)$ denotes the Hilbert polynomial of $E$.
    \end{Definition}

    Notice that, since $X$ is a smooth projective surface, the Hilbert polynomial with respect to $H$ of a rank $n$ torsion-free sheaf $E$ on $X$ with Chern classes $c_1$ and $c_2$ can be written as follows:  
\[\begin{array}{ll} 
\frac{P_{H,E}(m)}{n} &=\frac{H^2 m^2}{2} + \left[ \frac{c_1 H}{n}- \frac{K_X H}{2} \right]m 
+ \frac{1}{n}\left(\frac{c_1^2-(c_1 K_X)}{2}-c_2 \right)+\chi(\mathcal{O}_X)\\ 
&=\frac{H^2 m^2}{2} + \left[ \frac{c_1 H}{n}- \frac{K_X H}{2} \right]m+ \frac{\chi(E)}{n}\\
\end{array} \]
where $K_X$ denotes the canonical divisor of $X$ and $\chi(F)$ the Euler Characteristic of a sheaf $F$.

\begin{Definition}
Let   $\alpha \in \mathbb{Q}[m]$ and $H$ an ample divisor  on $X$. We say that $(E,V)$ is $\alpha$-stable (resp. $\alpha$-semistable) if for any proper coherent subsystem $0 \neq (F,W)\subset (E,V)$
the following inequality holds;
\begin{equation} \label{stability}
    p^{\alpha}_{H,(\mathcal{F},W)}(m) < p^{\alpha}_{H,(E,V)}(m), \,\,\,\,\,  (resp. \leq).
\end{equation}
\end{Definition}

 It is not hard to check that $\alpha \in \mathbb{Q}[m]_{>0}$ is a necessary condition for the existence of $\alpha-$semistable coherent systems (see \cite[Lemma 1.3]{Newstead}). Moreover,  note that in order to check the stability of a coherent system $(E,V)$, it  is enough to verify the inequality (\ref{stability}) for proper coherent subsystems $(F,W) \subset (E,V)$ such that $W=V\cap H^0(X,F)$. In addition we will see that we can characterize the stability in terms of torsion-free quotients. To this end, we need to introduce some technical results. 
 
\begin{Lemma}\label{additive}
    Let $\alpha\in \mathbb{Q}[m]_{>0}$  and  $H$ be an ample divisor on $X$. Consider the following exact sequence of coherent systems
    \[0 \to (F,W) \to (E,V) \to (G,Z) \to 0.\]
    The following holds:
    \begin{enumerate}
        \item If $p^{\alpha}_{H,(F,W)}(m) \leq p^{\alpha}_{H,(E,V)}(m)$, then $p^{\alpha}_{H,(E,V)}(m) \leq p^{\alpha}_{H,(G,Z)}(m)$,
 \item        If $p^{\alpha}_{H,(F,W)}(m) \geq p^{\alpha}_{H,(E,V)}(m)$, then $p^{\alpha}_{H,(E,V)}(m) \geq p^{\alpha}_{H,(G,Z)}(m)$, and
 \item        If $p^{\alpha}_{H,(F,W)}(m) =p^{\alpha}_{H,(E,V)}(m)$, then $p^{\alpha}_{H,(E,V)}(m) = p^{\alpha}_{H,(G,Z)}(m)$,
    \end{enumerate}
\end{Lemma}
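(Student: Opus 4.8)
The plan is to reduce all three statements to the elementary mediant inequality for a weighted average, after extracting from the exactness of the sequence the additivity of the three pieces of data that enter the reduced Hilbert polynomial.

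First I would record what the exact sequence $0 \to (F,W) \to (E,V) \to (G,Z) \to 0$ provides. Exactness of the underlying sheaf sequence $0 \to F \to E \to G \to 0$ yields additivity of ranks, $n_E = n_F + n_G$, and of Hilbert polynomials, $P_{H,E}(m) = P_{H,F}(m) + P_{H,G}(m)$, both being additive on short exact sequences of coherent sheaves. Exactness of the induced sequence of sections $0 \to W \to V \to Z \to 0$ gives $v = w + z$. Since $F$ and $G$ are nonzero torsion-free sheaves, $n_F$ and $n_G$ are positive integers with $n_F + n_G = n_E$, so the weights appearing below are genuinely positive.

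Combining these facts with the definition of the reduced Hilbert polynomial, the numerators $A := w\alpha + P_{H,F}$, $B := v\alpha + P_{H,E}$ and $C := z\alpha + P_{H,G}$ satisfy $B = A + C$, so that
\[ p^{\alpha}_{H,(E,V)} = \frac{B}{n_E} = \frac{A+C}{n_F+n_G} = \frac{n_F}{n_E}\, p^{\alpha}_{H,(F,W)} + \frac{n_G}{n_E}\, p^{\alpha}_{H,(G,Z)}. \]
Thus $p^{\alpha}_{H,(E,V)}$ is a convex combination, with positive weights $n_F/n_E$ and $n_G/n_E$, of $p^{\alpha}_{H,(F,W)}$ and $p^{\alpha}_{H,(G,Z)}$. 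A direct computation then yields the two identities
\[ p^{\alpha}_{H,(E,V)} - p^{\alpha}_{H,(F,W)} = \frac{n_G}{n_E}\left( p^{\alpha}_{H,(G,Z)} - p^{\alpha}_{H,(F,W)} \right), \]
\[ p^{\alpha}_{H,(G,Z)} - p^{\alpha}_{H,(E,V)} = \frac{n_F}{n_E}\left( p^{\alpha}_{H,(G,Z)} - p^{\alpha}_{H,(F,W)} \right), \]
in which both left-hand sides are positive rational multiples of the single polynomial $p^{\alpha}_{H,(G,Z)} - p^{\alpha}_{H,(F,W)}$.

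Finally I would invoke the fact that the order $\leq$ on $\mathbb{Q}[m]$ is preserved under multiplication by a positive rational constant, since by definition it only records the sign of the difference for $m \gg 0$ and this sign is unchanged by positive scaling. For statement (1), the hypothesis $p^{\alpha}_{H,(F,W)} \leq p^{\alpha}_{H,(E,V)}$ says the first left-hand side is $\geq 0$; dividing by $n_G/n_E > 0$ gives $p^{\alpha}_{H,(F,W)} \leq p^{\alpha}_{H,(G,Z)}$, and substituting into the second identity (multiplying by $n_F/n_E > 0$) yields $p^{\alpha}_{H,(E,V)} \leq p^{\alpha}_{H,(G,Z)}$, as required. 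Statements (2) and (3) follow from the identical computation by reversing, respectively combining, the inequalities. The only point requiring care — and the main, though mild, obstacle — is that $\leq$ here is the asymptotic order on polynomials rather than an order on numbers; once one checks that this order is compatible with positive scaling (and with addition), the classical mediant argument transfers verbatim, the positivity of $n_F$ and $n_G$ guaranteeing that no degenerate weights occur.
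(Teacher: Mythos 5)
Your proposal is correct and follows essentially the same route as the paper: both proofs come down to the additivity of rank, section dimension, and Hilbert polynomial across the exact sequence, followed by elementary manipulation of the resulting inequality of polynomials under the asymptotic order on $\mathbb{Q}[m]$. Your convex-combination (mediant) packaging, which expresses $p^{\alpha}_{H,(E,V)}$ as $\frac{n_F}{n_E}\,p^{\alpha}_{H,(F,W)}+\frac{n_G}{n_E}\,p^{\alpha}_{H,(G,Z)}$, is just a cleaner organization of the identical computation the paper carries out inline, with the added (and welcome) explicitness about the order being preserved under positive scaling and about treating all three cases at once.
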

\begin{proof}  
We will prove $(1)$. The proof of $(2)$ and $(3)$ follows exactly in the same way. Observe that $p^{\alpha}_{H,(F,W)}(m) \leq p^{\alpha}_{H,(E,V)}(m)$ if and only if
\[ \frac{c_1(F) H}{n_F}m+ \frac{\chi(F)}{n_F}+\alpha \frac{w}{n_F} \leq 
         \frac{c_1(E)H}{n_E}m + \frac{\chi(E)}{n_E}+\alpha \frac{v}{n_E}.
     \]
     Using the fact that by aditivity on short exact sequences, $\chi(E)=\chi(F)+\chi(G)$, $n_E=n_F+n_G$, $c_1(E)=c_1(F)+c_1(G)$ and $v=w+z$, this inequality is equivalent to 
 \[ \frac{(c_1(E)-c_1(G)) H}{n_E-n_G}m+ \frac{\chi(E)-\chi(G)}{n_E-n_G}+\alpha \frac{v-z}{n_E-n_G} \leq 
         \frac{c_1(E)H}{n_E}m + \frac{\chi(E)}{n_E}+\alpha \frac{v}{n_E}.
     \]    
     Hence,
     \[ -n_E(c_1(Q) Hm+ \chi(G)+\alpha z)\leq 
         -n_G(c_1(E)Hm + \chi(E)+\alpha v)
     \]    
 which is equivalent to $p^{\alpha}_{H,(E,V)}(m) \leq p^{\alpha}_{H,(G,Z)}(m)$.   
   \end{proof}

Let us now recall the following well known fact (see for instance \cite[Lemma 1.1.17]{Okonek}).

\begin{Lemma}\label{Monomorphism}
    Let $H$ be an ample divisor on $X$. If $F \to F'$ is a monomorphism of torsion-free sheaves of the same rank, then $c_1(F)H \leq c_1(F')H$.
\end{Lemma}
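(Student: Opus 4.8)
The plan is to reduce the stated inequality to the effectivity of the first Chern class of the cokernel. Since $\phi\colon F\to F'$ is a monomorphism of torsion-free sheaves of the same rank, say $n$, the cokernel $Q:=F'/\phi(F)$ is a torsion sheaf. To see this I would localize at the generic point $\eta$ of $X$: because localization is exact, $Q_\eta=\operatorname{coker}(\phi_\eta)$, and $\phi_\eta$ is an injective homomorphism of $n$-dimensional vector spaces over the function field $K(X)$, hence an isomorphism. Therefore $Q_\eta=0$, so $Q$ has rank zero, and we obtain the short exact sequence
\[0\longrightarrow F\longrightarrow F'\longrightarrow Q\longrightarrow 0.\]

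First I would invoke the additivity of the first Chern class on short exact sequences to write $c_1(F')=c_1(F)+c_1(Q)$, so that the assertion $c_1(F)H\leq c_1(F')H$ becomes equivalent to $c_1(Q)\cdot H\geq 0$. The heart of the argument is thus to show that $c_1(Q)$ is represented by an effective divisor on the surface $X$.

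The key step, which I expect to be the only point requiring care, is the computation of $c_1(Q)$ for the torsion sheaf $Q$. Since $X$ is a smooth surface, the support of $Q$ has dimension at most one, and $c_1(Q)$ is detected by its codimension-one part. Writing $\eta_i$ for the generic points of the irreducible curves $C_i$ appearing in the support of $Q$, one has
\[c_1(Q)=\sum_i \operatorname{length}_{\mathcal{O}_{X,\eta_i}}(Q_{\eta_i})\,[C_i],\]
a divisor all of whose coefficients are nonnegative. (Equivalently, since $X$ is smooth one may take a finite locally free resolution $0\to E_m\to\cdots\to E_0\to Q\to 0$ and read off $c_1(Q)=\sum_j(-1)^j c_1(E_j)$; the length formula above is the cleanest way to see the positivity of the coefficients.) Hence $c_1(Q)$ is an effective divisor class.

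Finally, since $H$ is ample we have $H\cdot C_i>0$ for every irreducible curve $C_i$, and therefore
\[c_1(Q)\cdot H=\sum_i \operatorname{length}_{\mathcal{O}_{X,\eta_i}}(Q_{\eta_i})\,(H\cdot C_i)\geq 0.\]
Combining this with $c_1(F')=c_1(F)+c_1(Q)$ yields $c_1(F')\cdot H\geq c_1(F)\cdot H$, as desired. The main obstacle is purely the effectivity statement for $c_1$ of a torsion sheaf on a smooth surface; once that is in hand, the ampleness of $H$ closes the argument immediately.
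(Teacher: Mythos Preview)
Your argument is correct: the cokernel $Q$ is torsion, $c_1(Q)$ is the effective divisor given by the codimension-one support of $Q$ counted with multiplicities, and ampleness of $H$ forces $c_1(Q)\cdot H\geq 0$. This is precisely the standard proof.

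As for comparison with the paper: there is nothing to compare, because the paper does not prove this lemma at all. It simply records the statement as a well-known fact and cites \cite[Lemma~1.1.17]{Okonek}. Your write-up therefore supplies exactly the content that the paper outsources to the reference, and the argument you give is essentially the one found there.
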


Now we are ready to state the characterization of stability by means of torsion-free quotients.

\begin{Proposition} \label{torsionfree}
    Let $\alpha\in \mathbb{Q}[m]_{>0}$  and $H$ be an ample divisor on $X$. Let $(E, V)$ be a  coherent system on $X$. Then, the following statements are equivalent:
    \begin{enumerate}
        \item $(E,V)$ is $\alpha-$stable.
        \item $p^\alpha_{H, (F,W)}(m) < p^\alpha_{H, (E,V)}(m)$ for all coherent subsystem $(F,W) \subset (E,V)$ with $0 < rank \, F < rank \, E$ whose quotient $(G,Z)$ has $G$ torsion-free.
        \item $p^\alpha_{H, (G,Z)}(m) > p^\alpha_{H, (E,V)}(m)$ for all quotients $(G,Z)$ of $(E,V)$ with $G$  torsion-free  and $0 < rank \,G < rank \, E$. 
    \end{enumerate}
\end{Proposition}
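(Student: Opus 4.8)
The plan is to establish the chain $(1)\Rightarrow(2)$, $(2)\Leftrightarrow(3)$, and $(2)\Rightarrow(1)$, the last being the only substantial step. The implication $(1)\Rightarrow(2)$ is immediate, since the subsystems quantified over in $(2)$ form a subclass of all proper subsystems, so the defining inequality of $\alpha$-stability holds a fortiori.

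For $(2)\Leftrightarrow(3)$ I would first record the bijection, given by kernel and cokernel, between the two families of objects. A torsion-free quotient $(G,Z)$ with $0<n_G<n_E$ is exactly $(E,V)/(F,W)$ for the saturated subsystem $F=\ker(E\to G)$, $W=V\cap H^0(X,F)$ (and $n_F=n_E-n_G$), and conversely. Applying Lemma \ref{additive} to the sequence $0\to(F,W)\to(E,V)\to(G,Z)\to0$ gives the equivalence of the two inequalities. Concretely, directly from the definition of the reduced Hilbert polynomial together with the additivity of $\chi$, of ranks, of $c_1\cdot H$ and of dimensions along the sequence, one has
\[ n_E\,p^{\alpha}_{H,(E,V)}(m)=n_F\,p^{\alpha}_{H,(F,W)}(m)+n_G\,p^{\alpha}_{H,(G,Z)}(m), \]
whence
\[ n_G\big(p^{\alpha}_{H,(G,Z)}(m)-p^{\alpha}_{H,(E,V)}(m)\big)=n_F\big(p^{\alpha}_{H,(E,V)}(m)-p^{\alpha}_{H,(F,W)}(m)\big). \]
Since $n_F,n_G>0$, the two sides have the same sign for $m\gg0$, so the strict inequalities in $(2)$ and $(3)$ correspond exactly, giving $(2)\Leftrightarrow(3)$.

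The heart of the argument is $(2)\Rightarrow(1)$. Let $(F,W)\subset(E,V)$ be an arbitrary proper subsystem; since enlarging $W$ only increases $w$ and hence (as $\alpha>0$) only increases $p^{\alpha}_{H,(F,W)}(m)$, it suffices to treat $W=V\cap H^0(X,F)$. If $n_F=n_E$, then $T:=E/F$ is a nonzero torsion sheaf, so $P_{H,F}=P_{H,E}-P_{H,T}$ with $P_{H,T}(m)=h^0(T(mH))>0$ for $m\gg0$; combined with $w\le v$ and $\alpha>0$ this forces $p^{\alpha}_{H,(F,W)}(m)<p^{\alpha}_{H,(E,V)}(m)$, so full-rank subsheaves never destabilize. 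If $0<n_F<n_E$, let $\overline{F}$ be the saturation of $F$ in $E$ and $\overline{W}=V\cap H^0(X,\overline{F})$, with $\overline{w}=\dim\overline{W}$. Then $n_{\overline{F}}=n_F$, so $0<n_{\overline{F}}<n_E$ and $E/\overline{F}$ is torsion-free; thus $(\overline{F},\overline{W})$ is one of the subsystems allowed in $(2)$ and $p^{\alpha}_{H,(\overline{F},\overline{W})}(m)<p^{\alpha}_{H,(E,V)}(m)$. Comparing $(F,W)$ with $(\overline{F},\overline{W})$ via $0\to F\to\overline{F}\to\overline{F}/F\to0$ gives
\[ n_F\big(p^{\alpha}_{H,(\overline{F},\overline{W})}(m)-p^{\alpha}_{H,(F,W)}(m)\big)=(\overline{w}-w)\,\alpha+P_{H,\overline{F}/F}(m)\ \ge\ 0\qquad(m\gg0), \]
because $\overline{w}\ge w$, $\alpha>0$, and $\overline{F}/F$ is torsion so $P_{H,\overline{F}/F}(m)\ge0$ for $m\gg0$ (at the level of the linear term this is Lemma \ref{Monomorphism}, giving $c_1(F)\cdot H\le c_1(\overline{F})\cdot H$). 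Chaining the two inequalities yields $p^{\alpha}_{H,(F,W)}(m)\le p^{\alpha}_{H,(\overline{F},\overline{W})}(m)<p^{\alpha}_{H,(E,V)}(m)$, which is $(1)$.

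I expect the main obstacle to be precisely the reduction in $(2)\Rightarrow(1)$: one must verify that the saturation preserves the rank (so that $\overline{F}$ is again a \emph{proper} subsheaf of strictly smaller rank, making hypothesis $(2)$ genuinely applicable), and one must dispose separately of the full-rank subsheaves, which $(2)$ does not see at all and which are handled using only $\alpha>0$ and the torsion quotient. Everything else — enlarging $W$, the additivity identity, and the comparison of Hilbert polynomials across the torsion quotient $\overline{F}/F$ — is routine bookkeeping once Lemmas \ref{additive} and \ref{Monomorphism} are in hand. The same argument with $\le$ in place of $<$ proves the corresponding statement for $\alpha$-semistability.
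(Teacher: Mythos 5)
Your proof is correct, and its skeleton coincides with the paper's: $(1)\Rightarrow(2)$ is formal, $(2)\Leftrightarrow(3)$ follows from additivity along the exact sequence (this is exactly the content of Lemma \ref{additive}; your displayed identity $n_E\,p^{\alpha}_{H,(E,V)}(m)=n_F\,p^{\alpha}_{H,(F,W)}(m)+n_G\,p^{\alpha}_{H,(G,Z)}(m)$ is just that lemma made explicit), and $(2)\Rightarrow(1)$ proceeds by replacing $F$ with its saturation --- the paper's $F_E=\ker\bigl(E\to(E/F)/T(E/F)\bigr)$ is precisely your $\overline{F}$. Two points of execution differ, both in your favor. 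First, to compare $(F,W)$ with the saturated subsystem, the paper keeps the same space of sections $W$ and splits into cases: if $c_1(F)H<c_1(F_E)H$ (via Lemma \ref{Monomorphism}) the linear term decides, while if $c_1(F)H=c_1(F_E)H$ it argues through $c_2(F_E/F)\le 0$ that $\chi(F)\le\chi(F_E)$; your single inequality $(\overline{w}-w)\,\alpha+P_{H,\overline{F}/F}(m)\ge 0$, resting only on the eventual nonnegativity of the Hilbert polynomial of the torsion sheaf $\overline{F}/F$, subsumes both cases at once. Second, you explicitly dispose of proper subsheaves of full rank $n_F=n_E$, which condition $(2)$ does not see (it quantifies only over $0<\rank F<\rank E$) and for which the saturation is all of $E$, so $(2)$ cannot be invoked; the paper's proof is silent on this case, and your observation that a nonzero torsion quotient forces $P_{H,E/F}(m)>0$ for $m\gg 0$, hence strict inequality, fills a genuine (if small) lacuna in the published argument.
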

\begin{proof}
By definition, $(1)$ implies $(2)$. Let us see that $(2)$ implies $(1)$.  Let $(F,W)$ be a coherent subsystem of $(E,V)$ with $W=V\cap H^0(X,F)$. We are going to see that 
$p^\alpha_{H,(F,W)} (m)< p^\alpha_{H,(E,V)}(m)$. To this end,  
let $(\mathcal{G},Z):=(E/F, V/W)$ be the corresponding quotient system. Define 
        \[F_E : = \ker( E \to (\mathcal{G})/ T(\mathcal{G}))\]
        where $T(.)$ denotes the torsion of a sheaf. Note that $F \to F_E$ is a monomorphism of torsion-free sheaves of the same rank $n_F$ hence, by Lemma \ref{Monomorphism}, we have $c_1(F) H \leq c_1(F_E)H$. Finally, since $W=V\cap H^0(X,F) \subseteq V\cap H^0(X,F_E) $,  define the coherent system $(F_E, W)$. It follows from $(2)$ that $p^\alpha_{H,(F_E,W)}(m) < p^\alpha_{H,(E,V)}(m)$. Let us see  that 
        \[p^\alpha_{H,(F,W)}(m) \leq p^\alpha_{H,(F_E,W)}(m).\]
If $c_1(F) H < c_1(F_E)H$, then
\[\begin{aligned}
                p^\alpha_{H,(F,W)}(m) &= \frac{w}{n_F}\alpha +\frac{H^2 m^2}{2} + \left[ \frac{c_1(F) H}{n_F}- \frac{K_XH}{2} \right]m+ \frac{\chi(F)}{n_F} \\
                &< \frac{w}{n_F}\alpha +\frac{H^2 m^2}{2} + \left[ \frac{c_1(F_E) H}{n_F}- \frac{K_X H}{2} \right]m+ \frac{\chi(F_E)}{n_F} \\
                & =p^\alpha_{H,(F_E,W)}(m).
            \end{aligned}\]
Finally, if $c_1(F) H =c_1(F_E)H$, since $F_E/F$ is a torsion sheaf, $c_2(F_E/F) \leq 0$ and from the exact  sequence 
            \begin{equation}\label{equation}
                0 \to F \to F_E \to F_E/F \to 0
            \end{equation}
     we obtain $c_2(F_E)=c_2(F)+c_2(F/F_E)< c_2(F)$ which implies that  $\chi(F)\leq \chi(F_E)$ and hence $p^\alpha_{H,(F,W)}(m) \leq p^\alpha_{H,(F_E,W)}(m)$. 
            
Let us now see that $(2)$ implies $(3)$.  Let $(G,Z)$ be a quotient coherent system of $(E,V)$ with $G$ torsion-free. Let $(F, W)$ be the corresponding coherent subsystem which fits the following exact sequence
\[0 \to (F, W) \to (E,V) \to (G,Z) \to 0.\]
By hypothesis $p^\alpha_{H,(F,W)}(m) < p^\alpha_{H,(E,V)}(m)$, and thus from Lemma \ref{additive} we conclude that $p^\alpha_{H,(E,V)}(m) < p^\alpha_{H,(G,Z)}(m)$. The converse follows exactly in the same way.         
\end{proof}

\begin{Definition}
    Let $\alpha\in\mathbb{Q}[m]_{>0}$. We say that $\alpha$ is a regular value if there exists $\beta_1, \beta_2 \in \mathbb{Q}[m]_{>0}$ with $\beta_1<\alpha < \beta_2$  such that $(E,V)$ is $\gamma$-stable  for any  $\gamma\in(\beta_1,\beta_2)$. If $\alpha$ is not a regular value we say that it is a critical value.
\end{Definition}

\begin{Proposition}
\label{prop_valorcritico}
    Let $\alpha\in\mathbb{Q}[m]_{>0}$ and  $H$ be an ample divisor on $X$. Let $(E,V)$ be an $\alpha$-semistable coherent system of type $(n_E,c_1,c_2,v)$. Then $\alpha$ is a critical value if and only if there exists a coherent subsystem $(E',V')\subset (E,V)$ with $\frac{v'}{n_{E'}}\neq\frac{v}{n_{E}}$ such that $p^\alpha_{H,(E',V')}(m)=p^\alpha_{H,(E,V)}(m)$.
\end{Proposition}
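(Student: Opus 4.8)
The plan is to exploit the fact that, once the parameter is allowed to vary, the reduced Hilbert polynomial depends \emph{affinely} on it. For a proper nonzero coherent subsystem $(F,W)\subset(E,V)$ and a parameter $\gamma\in\mathbb{Q}[m]_{>0}$, set
\[
\delta_F(\gamma):=p^{\gamma}_{H,(F,W)}(m)-p^{\gamma}_{H,(E,V)}(m).
\]
Using the explicit shape of the reduced Hilbert polynomial recorded after the definition, the $m^2$-coefficient $\tfrac{H^2}{2}$ is common to both terms and cancels, so that
\[
\delta_F(\gamma)=\Big(\tfrac{w}{n_F}-\tfrac{v}{n_E}\Big)\gamma+\Big(\tfrac{c_1(F)H}{n_F}-\tfrac{c_1(E)H}{n_E}\Big)m+\Big(\tfrac{\chi(F)}{n_F}-\tfrac{\chi(E)}{n_E}\Big).
\]
Writing $\mu_F:=\tfrac{w}{n_F}-\tfrac{v}{n_E}$ for the ``slope'' (so that the hypothesis $\tfrac{v'}{n_{E'}}\neq\tfrac{v}{n_E}$ reads $\mu_{E'}\neq0$), the map $\gamma\mapsto\delta_F(\gamma)$ is affine with leading coefficient $\mu_F$. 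The single observation driving the whole argument is that if $\delta_F(\alpha)=0$ then the affine part is forced, giving $\delta_F(\gamma)=\mu_F\,(\gamma-\alpha)$; hence the sign of $\delta_F(\gamma)$, for $m\gg0$, is governed by $\mathrm{sign}(\mu_F)\cdot\mathrm{sign}(\gamma-\alpha)$.

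For the implication ``$\Leftarrow$'', suppose such an $(E',V')$ exists, so $\mu_{E'}\neq0$ and $\delta_{E'}(\alpha)=0$, whence $\delta_{E'}(\gamma)=\mu_{E'}(\gamma-\alpha)$. Choosing $\gamma$ on the side of $\alpha$ on which $\mu_{E'}(\gamma-\alpha)>0$ (that is $\gamma>\alpha$ if $\mu_{E'}>0$ and $\gamma<\alpha$ if $\mu_{E'}<0$) gives $\delta_{E'}(\gamma)>0$, so $p^{\gamma}_{H,(E',V')}(m)>p^{\gamma}_{H,(E,V)}(m)$ and $(E,V)$ fails to be $\gamma$-semistable, a fortiori not $\gamma$-stable. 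Such $\gamma$ occur arbitrarily close to $\alpha$, so no interval $(\beta_1,\beta_2)\ni\alpha$ can consist entirely of $\gamma$ for which $(E,V)$ is $\gamma$-stable; by definition $\alpha$ is a critical value.

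For the implication ``$\Rightarrow$'' I would argue by contraposition: assuming every proper nonzero subsystem with $\mu_F\neq0$ satisfies $\delta_F(\alpha)<0$ (the inequality is $\leq0$ by $\alpha$-semistability, and $\neq0$ by hypothesis), I want to produce an interval around $\alpha$ on which $(E,V)$ is $\gamma$-stable. For a subsystem with $\mu_F=0$ the function $\delta_F$ is constant in $\gamma$ and equals $\delta_F(\alpha)\leq0$, so such subsystems do not obstruct stability on either side, provided this constant is strictly negative (see the obstacle noted below). For a subsystem with $\mu_F\neq0$ we have $\delta_F(\alpha)<0$, and since $\delta_F(\gamma)=\delta_F(\alpha)+\mu_F(\gamma-\alpha)$, the inequality $\delta_F(\gamma)<0$ persists for $\gamma$ close enough to $\alpha$. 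To turn this into a single interval valid for all subsystems simultaneously, I would invoke boundedness: by Proposition \ref{torsionfree} it suffices to test stability against subsystems whose quotient is torsion-free, and the subsheaves of a fixed semistable $(E,V)$ that can come close to destabilizing it form a bounded family, so only finitely many numerical types $(n_F,\,c_1(F)H,\,\chi(F),\,w)$, hence only finitely many affine functions $\delta_F$, are relevant. Taking the minimum over this finite set of the largest symmetric interval on which $\delta_F(\gamma)<0$ produces $\beta_1<\alpha<\beta_2$ with $(E,V)$ being $\gamma$-stable for all $\gamma\in(\beta_1,\beta_2)$, i.e.\ $\alpha$ regular.

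The main obstacle is the uniformity/boundedness step in the ``$\Rightarrow$'' direction: one must guarantee that only finitely many numerical types of subsystem need be controlled, so that the individual neighbourhoods on which $\delta_F(\gamma)<0$ do not shrink to $\{\alpha\}$. Equivalently, one may run the argument in the limit: if $\alpha$ is critical, pick $\gamma_i\to\alpha$ from a fixed side with $(E,V)$ not $\gamma_i$-stable, choose destabilizing subsystems $F_i$ (with torsion-free quotient, via Proposition \ref{torsionfree}), use boundedness to extract a constant numerical type, and pass to the limit to obtain a single subsystem $(E',V')$ with $\delta_{E'}(\alpha)=0$; fixing the side on which $\gamma_i\to\alpha$ pins down the sign of $\mu_{E'}$. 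The only genuinely delicate point left is then the degenerate case $\mu_{E'}=0$ with $\delta_{E'}\equiv0$, i.e.\ a subsystem whose reduced Hilbert polynomial coincides with that of $(E,V)$ for \emph{every} $\gamma$; this is exactly the configuration that must be ruled out (or shown not to interfere) for the stated equivalence, and it is where I would concentrate the most care.
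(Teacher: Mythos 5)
Your proposal is correct and follows essentially the same route as the paper's proof: for the implication ``exists such a subsystem $\Rightarrow$ $\alpha$ critical'' the paper likewise uses that $p^{\gamma}_{H,(E',V')}(m)-p^{\gamma}_{H,(E,V)}(m)$ is affine in $\gamma$ with slope $\frac{v'}{n_{E'}}-\frac{v}{n_E}$ and vanishes at $\alpha$, so that $(E,V)$ is destabilized on one entire side of $\alpha$; for the converse the paper runs the same contraposition, perturbing to $\beta=\alpha\pm\epsilon$ with $\epsilon\ll1$. The two points you single out as delicate are exactly where the paper is terse: it asserts a uniform choice of $\epsilon$ valid for all subsystems without the finiteness/discreteness justification you sketch (it is justified because $n_{E'}$, $v'$, $c_1(E')H$ and $\chi(E')$ range over a discrete set with bounded denominators, or by the boundedness argument you give), and the degenerate case $\frac{v'}{n_{E'}}=\frac{v}{n_E}$ with $p^{\gamma}_{H,(E',V')}(m)-p^{\gamma}_{H,(E,V)}(m)\equiv 0$ is handled not inside the paper's proof (which simply assumes no subsystem attains equality at $\alpha$) but in the Remark immediately following the Proposition, where such subsystems are discounted because they leave the stability condition unchanged for every $\gamma$.
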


\begin{proof}

    Assume that there exists a coherent subsystem $(E',V')\subset (E,V)$ of type $(n_{E'},c_1,c_2,v')$, with $\frac{v'}{n_{E'}}\neq\frac{v}{n_{E}}$, such that $p^\alpha_{H,(E',V')}(m)=p^\alpha_{H,(E,V)}(m)$. Note that $p^\alpha_{H,(E',V')}(m)=p^\alpha_{H,(E,V)}(m)$ is equivalent to $\alpha=\frac{1}{\frac{v'}{n_{E'}}-\frac{v}{n_E}}\left(\frac{P_{H,E}(m)}{n_E}-\frac{P_{H,E'}(m)}{n_{E'}}\right)$.
    Let us  assume that $\frac{v'}{n_{E'}}-\frac{v}{n_E}>0$, and  consider $\beta>\alpha$. Notice that $$\beta>\alpha=\frac{1}{\frac{v'}{n_{E'}}-\frac{v}{n_E}}\left(\frac{P_{H,E}(m)}{n_E}-\frac{P_{H,E'}(m)}{n_{E'}}\right)$$ is equivalent to $$ \frac{P_{H,E'}(m)}{n_{E'}}+\beta \frac{v'}{n_{E'}}>\frac{P_{H,E}(m)}{n_E}+\beta \frac{v}{n_E}$$
    which implies that $(E,V)$ is $\beta$-unstable for $\beta >\alpha$. If  $\frac{v'}{n_{E'}}-\frac{v}{n_{E'}}<0$, we consider $$\beta<\alpha=\frac{1}{\frac{v'}{n_{E'}}-\frac{v}{n_E}}\left(\frac{P_{H,E}(m)}{n_E}-\frac{P_{H,E'}(m)}{n_{E'}}\right)$$ which is equivalent to $$ \frac{P_{H,E'}(m)}{n_{E'}}+\beta \frac{v'}{n_{E'}}>\frac{P_{H,E}(m)}{n_E}+\beta \frac{v}{n_E}$$
    which implies that $(E,V)$ is $\beta$-unstable for $\beta <\alpha$.
Putting altogether we get by definition that $\alpha$ is a critical value.

    Let us now assume that $\alpha$ is a critical value. Suppose that there is no a subsystem $(E',V')\subset (E,V)$ such that $p^{\alpha}_{H,(E',V')}(m)=p^{\alpha}_{H,(E,V)}(m)$. Since $(E,V)$ is $\alpha$-semistable,we have  $p^{\alpha}_{H,(E',V')}(m)<p^{\alpha}_{H,(E,V)}(m)$ for any $(E',V')\subset (E,V)$. Let us now consider $\epsilon \in \mathbb{Q}_{>0}$ such that $\epsilon << 1$. Define $\beta=\alpha+\epsilon$. Then  $p^{\beta-\epsilon}_{H,(E',V')}(m)<p^{\beta-\epsilon}_{H,(E,V)}(m)$ for any $(E',V')\subset (E,V)$ which is equivalent to $$\beta\left(\frac{v'}{n_{E'}}-\frac{v}{n_E}\right)+\frac{P_{H,E'}(m)}{n_{E'}}-\frac{P_{H,E}(m)}{n_E}<\epsilon\left(\frac{v'}{n_{E'}}-\frac{v}{n_E}\right).$$ 
    
    Since $0<\epsilon <<1$, we get $$\beta\left(\frac{k'}{n'}-\frac{k}{n}\right)+\frac{P_{H,E'}(m)}{n_{E'}}-\frac{P_{H,E}(m)}{n_E}<0$$ for any $(E',V')\subset (E,V)$ and thus $(E,V)$ is $\beta$-stable. Analogously, if we take $\beta=\alpha-\epsilon$, we get that $(E,V)$ is $\beta$-stable.

    Putting altogether, $(E,V)$ is $\beta$-stable for any $\beta$ such that $$\alpha-\epsilon\leq\beta\leq\alpha+\epsilon,$$ which contradicts the fact that $\alpha$ is a critical value. 
    
\end{proof}

\begin{Remark} \rm
    Notice that in Proposition \ref{prop_valorcritico} we assume that $\frac{v'}{n_{E'}}\neq\frac{v}{n_{E}}$. If $\frac{v'}{n_{E'}}=\frac{v}{n_{E}}$,  the function $p^\beta_{H,(E',V')}(m)-p^\beta_{H,(E,V)}(m)$  is constant for any $\beta\in \mathbb{Q}[m]_{>0}$, which implies that the notion of stability does not change and thus by definition $\alpha$ is not a critical value.
\end{Remark}

\begin{Notation} \rm
    According to \cite{He}; Theorem 4.2, Given $(n,c_1,c_2,k)$ there is a finite number of critical values
    \[0 = \alpha_0 < \alpha_1 < \cdots < \alpha_s\]
    with $\alpha_i \in \mathbb{Q}[m]_{>0} $ and it in addition $\deg(\alpha_i) < \dim(X)$. 
\end{Notation}

\begin{Definition}
Let $\alpha_{i-1}, \alpha_i, \alpha_{i+1} \in \mathbb{Q}[m]_{>0}$ be consecutive critical values and  let $(E,V)$ be a coherent system.  We will say that $(E,V)$ is $\alpha_i^+-$stable if $(E,V)$ is $\beta-$stable for $\beta \in (\alpha_i, \alpha_{i+1})$.  We will say that $(E,V)$ is $\alpha_i^--$stable if $(E,V)$ is $\beta-$stable for $\beta \in (\alpha_{i-1}, \alpha_{i})$.
\end{Definition}

 \begin{Remark} \label{smallalpha}\rm 
      (1)  If $\alpha_i \in  \mathbb{Q}[m]_{>0}$  is a critical value and $(E,V)$ is a coherent system such that $(E,V)$ is $\alpha_i^+-$stable but $\alpha_i^--$unstable, then by Proposition \ref{prop_valorcritico} $(E,V)$ is strictly $\alpha_i$-semistable.
        Analogously, if $(E,V)$ is $\alpha_i^--$stable but $\alpha_i^+-$unstable, then $(E,V)$ is strictly $\alpha_i$-semistable.
        
        \noindent (2) Let $H$ be an ample divisor on $X$ and $(E,V)$ be a coherent system. The following holds:
        \begin{itemize}
            \item[(a)] If $E$ is Gieseker $H-$stable and $\dim H^0(X,E) \geq k$, then  $(E,V)$ is $0^+-$stable.
            \item[(b)] If $(E,V)$ is $0^+-$stable, then $E$ is Gieseker  $H-$semistable. 
        \end{itemize}

\end{Remark}

\begin{Lemma}
\label{lema6.1}
   Let $(E,V)$ be a coherent system of type $(n_E,c_1,c_2,v)$ and let $(E',V') \subset (E,V)$ be a subsystem of type $(n_{E'},c_1',c'_2,v')$. Then $p^{\alpha}_{H,(E',V')}(m)-p^{\alpha}_{H,(E,V)}(m)$ is a linear function of $\alpha$ which is
   \begin{itemize}
       \item monotonically increasing if $\frac{v'}{n_{E'}}-\frac{v}{n_E}>0$
       \item monotonically decreasing if $\frac{v'}{n_{E'}}-\frac{v}{n_{E}}<0$
       \item constant if $\frac{v'}{n_{E'}}-\frac{v}{n_E}=0$
   \end{itemize}

In particular, if $\alpha_i$ is a critical value and $p^{\alpha_i}_{H,(E',V')}(m)=p^{\alpha_i}_{H,(E,V)}(m)$, then
\begin{itemize}
    \item $(p^{\alpha}_{H,(E',V')}(m)-p^{\alpha}_{H,(E,V)}(m))(\alpha-\alpha_i)>0$, for all $\alpha\neq\alpha_i$ if $\frac{v'}{n_{E'}}-\frac{v}{n_E}>0$,
    \item $(p^{\alpha}_{H,(E',V')}(m)-p^{\alpha}_{H,(E,V)}(m))(\alpha-\alpha_i)<0$, for all $\alpha\neq\alpha_i$ if $\frac{v'}{n_{E'}}-\frac{v}{n_E}<0$,
    \item $p^{\alpha}_{H,(E',V')}(m)-p^{\alpha}_{H,(E,V)}(m)=0$, for all $\alpha$ if $\frac{v'}{n_{E'}}-\frac{v}{n_E}=0$.
\end{itemize}
   
\end{Lemma}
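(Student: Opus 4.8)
The plan is to unwind the definition of the reduced Hilbert polynomial and observe that the quantity in question is, tautologically, an affine-linear function of $\alpha$ whose slope is exactly $\frac{v'}{n_{E'}}-\frac{v}{n_E}$; all three cases of the statement then reduce to elementary sign analysis.

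First I would substitute the defining formula $p^{\alpha}_{H,(E,V)}(m)=\frac{v}{n_E}\alpha+\frac{P_{H,E}(m)}{n_E}$ (and its analogue for $(E',V')$) into the difference. Writing $D(\alpha):=p^{\alpha}_{H,(E',V')}(m)-p^{\alpha}_{H,(E,V)}(m)$, one gets
\[
D(\alpha)=\left(\frac{v'}{n_{E'}}-\frac{v}{n_E}\right)\alpha+\left(\frac{P_{H,E'}(m)}{n_{E'}}-\frac{P_{H,E}(m)}{n_E}\right).
\]
Since both Hilbert polynomials have the same leading term $\frac{H^2m^2}{2}$, these cancel in the bracketed term, which in any case does not involve $\alpha$. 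Thus $D(\alpha)=A\alpha+B$ with $A=\frac{v'}{n_{E'}}-\frac{v}{n_E}$ a rational constant and $B$ a polynomial in $m$ independent of $\alpha$. The three monotonicity alternatives of the first part are then immediate from the sign of the slope $A$, the comparisons being understood in the polynomial order $\leq$ of the paper (that is, positivity for $m\gg0$).

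For the ``in particular'' part I would use the hypothesis that $\alpha_i$ is a value at which $D$ vanishes, i.e. $A\alpha_i+B=0$, equivalently $B=-A\alpha_i$. Substituting this back yields the clean identity $D(\alpha)=A(\alpha-\alpha_i)$. Multiplying by $(\alpha-\alpha_i)$ gives
\[
D(\alpha)\,(\alpha-\alpha_i)=A\,(\alpha-\alpha_i)^2,
\]
and since $(\alpha-\alpha_i)^2>0$ for every $\alpha\neq\alpha_i$, the sign of the left-hand side agrees with the sign of $A$. This produces the first two bullet conclusions ($>0$ when $A>0$, $<0$ when $A<0$); the third is the degenerate case $A=0$, in which $D\equiv0$ identically.

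There is no genuine obstacle here: once the difference is recognised as affine in $\alpha$ with slope $\frac{v'}{n_{E'}}-\frac{v}{n_E}$, everything is elementary. The only point requiring a little care is the bookkeeping, namely recording that $\alpha$, $\alpha_i$ and $B$ all live in $\mathbb{Q}[m]$, so that statements such as ``$(\alpha-\alpha_i)^2>0$'' and the monotonicity claims are read in the ordering $\leq$ on $\mathbb{Q}[m]$ fixed earlier in the paper.
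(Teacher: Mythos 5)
Your proof is correct and follows essentially the same route as the paper: both write the difference as an affine function of $\alpha$ with slope $\frac{v'}{n_{E'}}-\frac{v}{n_E}$, read off monotonicity from the sign of that slope, and obtain the ``in particular'' statements by substituting the vanishing condition at $\alpha_i$ and multiplying by $(\alpha-\alpha_i)$ to get the identity $D(\alpha)(\alpha-\alpha_i)=\left(\frac{v'}{n_{E'}}-\frac{v}{n_E}\right)(\alpha-\alpha_i)^2$. Your added remark that the comparisons are to be read in the polynomial ordering on $\mathbb{Q}[m]$ is a worthwhile clarification, but the argument is the same.
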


\begin{proof}
    Notice that by definition 
    \begin{equation} \label{falpha}
    f^{\alpha}:=p^{\alpha}_{H,(E',V')}(m)-p^{\alpha}_{H,(E,V)}(m)= \alpha \left(\frac{v'}{n_{E'}}-\frac{v}{n_E}\right)+\frac{P_{H,E'}(m)}{n_{E'}}-\frac{P_{H,E}(m)}{n_E}\end{equation}
    is a linear function on $\alpha$. 

    Let us now see whether this function $f^{\alpha}$ is monotonically decreasing or increasing. Observe that 
    $$f^{\beta}-f^{\alpha}=\beta \left(\frac{v'}{n_{E'}}-\frac{v}{n_E}\right)+\frac{P_{H,E'}(m)}{n_{E'}}-\frac{P_{H,E}(m)}{n_E}-\left[\alpha \left(\frac{v'}{n_{E'}}-\frac{v}{n_E}\right)+\frac{P_{H,E'}(m)}{n_{E'}}-\frac{P_{H,E}(m)}{n_E}\right]$$
    $$=(\beta-\alpha) \left(\frac{v'}{n_{E'}}-\frac{v}{n_E}\right).$$
    
    Then, if  $\frac{v'}{n_{E'}}-\frac{v}{n_E}>0$, we get $f^{\beta}>f^{\alpha}$ for $\beta>\alpha$ which  means that $f^\alpha$ monotonically increases. Analogously, if  $\frac{v'}{n_{E'}}-\frac{v}{n_E}<0$ then $f^\alpha$ monotonically decreases. Finally, if 
    $\frac{v'}{n_{E'}}-\frac{v}{n_E}=0$ then $f^{\beta}=f^{\alpha}$ for any $\beta$ and hence $f^{\alpha}$ is constant.

    Let us now assume that there exists $\alpha_i$ a critical value such that $p^{\alpha_i}_{H,(E',V')}(m)=p^{\alpha_i}_{H,(E,V)}(m)$. This is equivalent to $$\alpha_i \left(\frac{v'}{n_{E'}}-\frac{v}{n_E}\right)=\frac{P_{H,E}(m)}{n_E}-\frac{P_{H,E'}(m)}{n_{E'}}.$$

    Hence, substituting in $(\ref{falpha})$ and multiplying both sides by $(\alpha-\alpha_i)$  we get

 $$(\alpha-\alpha_i)(p^{\alpha}_{H,(E',V')}(m)-p^{\alpha}_{H,(E,V)}(m))= (\alpha-\alpha_i)^2 \left(\frac{v'}{n_{E'}}-\frac{v}{n_E}\right)$$
 and the remaining conclusions easily follow from this equality.
   \end{proof}

\begin{Corollary}
    
\label{monotony}
    Let $\alpha_i \in  \mathbb{Q}[m]_{>0}$ be a critical value and let $(E,V)$ be a coherent system  of type $(n_E,c_1,c_2,v)$ such that $(E,V)$ is strictly $\alpha_i$-semistable.  For any coherent subsystem $(E',V')\subset (E,V)$ of type $(n_{E'},c'_1,c'_2,v')$ such that
    $p^{\alpha_i}_{H,(E',V')}(m)=p^{\alpha_i}_{H,(E,V)}$, the following statements holds:
    \begin{enumerate}
        \item If $(E,V)$ is $\alpha_i^+-$(semi)stable, then $\frac{v'}{n_{E'}} <(\leq) \frac{v}{n_E}$.
        \item If $(E,V)$ is $\alpha_i^--$(semi)stable, then $\frac{v}{n_E} <(\leq) \frac{v'}{n_{E'}}$.
    \end{enumerate}
\end{Corollary}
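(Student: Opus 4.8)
The plan is to read off the sign of $\frac{v'}{n_{E'}}-\frac{v}{n_E}$ directly from Lemma \ref{lema6.1}, once we know the sign of $f^{\alpha}:=p^{\alpha}_{H,(E',V')}(m)-p^{\alpha}_{H,(E,V)}(m)$ on one side of $\alpha_i$. By hypothesis $p^{\alpha_i}_{H,(E',V')}(m)=p^{\alpha_i}_{H,(E,V)}(m)$, so $f^{\alpha_i}=0$ and we are precisely in the situation covered by the second part of Lemma \ref{lema6.1}: the product $f^{\alpha}(\alpha-\alpha_i)$ is strictly positive (resp.\ strictly negative, resp.\ identically zero) for all $\alpha\ne\alpha_i$ according as $\frac{v'}{n_{E'}}-\frac{v}{n_E}$ is positive (resp.\ negative, resp.\ zero). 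Thus it suffices to determine the sign of $f^{\beta}$ for a single $\beta$ lying on the appropriate side of $\alpha_i$.

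First I would treat the $\alpha_i^+$ case. If $(E,V)$ is $\alpha_i^+$-stable then, by definition, it is $\beta$-stable for $\beta\in(\alpha_i,\alpha_{i+1})$; applying the stability inequality to the subsystem $(E',V')$ gives $f^{\beta}<0$ for such $\beta$. Fixing one such $\beta>\alpha_i$, so that $\beta-\alpha_i>0$, I run through the three cases of Lemma \ref{lema6.1}: if $\frac{v'}{n_{E'}}-\frac{v}{n_E}>0$ then $f^{\beta}(\beta-\alpha_i)>0$ forces $f^{\beta}>0$, and if $\frac{v'}{n_{E'}}-\frac{v}{n_E}=0$ then $f^{\beta}=0$; both contradict $f^{\beta}<0$. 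Hence $\frac{v'}{n_{E'}}-\frac{v}{n_E}<0$, i.e.\ $\frac{v'}{n_{E'}}<\frac{v}{n_E}$. For the semistable variant the stability inequality is only $f^{\beta}\le 0$, which still excludes the case $\frac{v'}{n_{E'}}-\frac{v}{n_E}>0$, yielding $\frac{v'}{n_{E'}}\le\frac{v}{n_E}$.

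The $\alpha_i^-$ case is entirely symmetric: $\alpha_i^-$-stability gives $f^{\beta}<0$ for some $\beta\in(\alpha_{i-1},\alpha_i)$, so now $\beta-\alpha_i<0$. The same case analysis via Lemma \ref{lema6.1} shows that the only consistent possibility is $\frac{v'}{n_{E'}}-\frac{v}{n_E}>0$ (the product $f^{\beta}(\beta-\alpha_i)$ must then be positive, matching $f^{\beta}<0$ with $\beta-\alpha_i<0$), giving $\frac{v}{n_E}<\frac{v'}{n_{E'}}$, and the semistable version gives $\frac{v}{n_E}\le\frac{v'}{n_{E'}}$.

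I expect no real obstacle here: Lemma \ref{lema6.1} already encodes the monotonicity of $f^{\alpha}$ in $\alpha$ together with its vanishing at the critical value, so the whole argument reduces to sign bookkeeping. The only point requiring care is keeping the strict versus non-strict inequalities aligned with stability versus semistability, which is exactly the distinction between $f^{\beta}<0$ and $f^{\beta}\le 0$ on the relevant side of $\alpha_i$.
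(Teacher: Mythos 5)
Your proposal is correct and takes essentially the same approach as the paper: both arguments reduce to the sign dichotomy of Lemma \ref{lema6.1} for $f^{\alpha}=p^{\alpha}_{H,(E',V')}(m)-p^{\alpha}_{H,(E,V)}(m)$, combined with the (semi)stability inequality $f^{\beta}<0$ (resp.\ $f^{\beta}\leq 0$) on the appropriate side of $\alpha_i$. The only difference is presentational: the paper writes out just the $\alpha_i^+$-stable case by assuming $\frac{v'}{n_{E'}}\geq\frac{v}{n_E}$ and deriving a contradiction, whereas you spell out all three sign cases and both sides of $\alpha_i$ explicitly.
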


\begin{proof}
   We will consider the case in which $(E,V)$  is $\alpha_i^+$-stable. The other cases can be drawn using the same arguments. Let us assume that  $\frac{v'}{n_{E'}}\geq\frac{v}{n_E}$. Then, by Lemma \ref{lema6.1}, we get that $p^{\alpha_i}_{H,(E',V')}(m)-p^{\alpha_i}_{H,(E,V)}(m)\geq0$, what contradicts the $\alpha_i^+$-(semi)stability of $(E,V)$. Hence $\frac{v'}{n_{E'}}<\frac{v}{n_E}$.
\end{proof}

\begin{Proposition}
\label{exactseq}
    Let $\alpha_i \in \mathbb{Q}[m]_{>0}$ be a critical value and let $(E,V)$ be a coherent system.
        Assume  that $(E,V)$ is $\alpha_i^+-$stable but $\alpha_i^--$unstable. Then, $(E,V)$ can be written in the following exact sequence of coherent systems
        \[0 \to (E_1,V_1) \to (E,V) \to (E_2,V_2) \to 0\]
        where $(E_i,V_i)$ are coherent systems of type $(n_{E_i},c_1(E_i),c_2(E_i),v_i)$ for $i=1,2$ such that
        \begin{enumerate}
            \item[(a)] $(E_1,V_1), (E_2,V_2)$ are ${\alpha_i}^+-$stable, with $p_{H,(E_1,V_1)}^{\alpha_i^+}(m) < p_{H,(E_2,V_2)}^{\alpha_i^+}(m)$,
        \item[(b)] $(E_1,V_1), (E_2,V_2)$ are $\alpha_i-$semistable, with $p_{H,(E_1,V_1)}^{\alpha_i}(m) = p_{H,(E_2,V_2)}^{\alpha_i}(m)$,
        \item[(c)] $\frac{v_1}{n_{E_1}}$ is a maximum among all proper subsystems $(E_1,V_1)\subset (E,V)$ which satisfy $(b)$,
        \item[(d)] $n_{E_1}$ is a minimum among all subsystems which satisfy $(c)$.
        \end{enumerate}
\end{Proposition}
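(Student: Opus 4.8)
The plan is to single out the canonical destabilizing subsystem responsible for the flip at $\alpha_i$, and then verify (a)--(d) one at a time, routing every comparison through Lemmas \ref{additive} and \ref{lema6.1} and Corollary \ref{monotony}. First I would note that, by Remark \ref{smallalpha}(1), the hypothesis ($\alpha_i^+$-stable but $\alpha_i^-$-unstable) forces $(E,V)$ to be strictly $\alpha_i$-semistable, so the set $\mathcal{S}$ of proper subsystems $(F,W)\subset(E,V)$ with $p^{\alpha_i}_{H,(F,W)}(m)=p^{\alpha_i}_{H,(E,V)}(m)$ is nonempty. Since any such $(F,W)$ has $1\le n_F\le n_E$ and $0\le w\le v$, the ratios $\frac{w}{n_F}$ attain only finitely many values; let $\mu$ be their maximum (realizing (c)) and, among members of $\mathcal{S}$ of slope $\mu$, choose $(E_1,V_1)$ of minimal rank $n_{E_1}$ (realizing (d)). Every member of $\mathcal{S}$ automatically satisfies $V_1=V\cap H^0(X,E_1)$: enlarging $V_1$ to $V\cap H^0(X,E_1)$ strictly raises $p^{\alpha_i}_{H,(E_1,V_1)}(m)$ (its $\alpha_i$-coefficient $\frac{v_1}{n_{E_1}}$ is multiplied by the positive polynomial $\alpha_i$), which would contradict the $\alpha_i$-semistability of $(E,V)$. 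Hence $(E_2,V_2):=(E/E_1,V/V_1)$ is a genuine coherent system and we obtain the asserted exact sequence.

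For (b): by construction $p^{\alpha_i}_{H,(E_1,V_1)}(m)=p^{\alpha_i}_{H,(E,V)}(m)$, and Lemma \ref{additive}(3) gives $p^{\alpha_i}_{H,(E,V)}(m)=p^{\alpha_i}_{H,(E_2,V_2)}(m)$, so the two reduced polynomials agree at $\alpha_i$. The $\alpha_i$-semistability of $(E_1,V_1)$ is immediate, since any subsystem of $(E_1,V_1)$ is a subsystem of $(E,V)$ and hence has reduced polynomial $\le p^{\alpha_i}_{H,(E,V)}(m)=p^{\alpha_i}_{H,(E_1,V_1)}(m)$. For $(E_2,V_2)$, a subsystem $(G',Z')$ lifts to $(F,W)$ with $(E_1,V_1)\subset(F,W)\subset(E,V)$; since $p^{\alpha_i}_{H,(E_1,V_1)}(m)=p^{\alpha_i}_{H,(E,V)}(m)\ge p^{\alpha_i}_{H,(F,W)}(m)$, Lemma \ref{additive}(2) yields $p^{\alpha_i}_{H,(F,W)}(m)\ge p^{\alpha_i}_{H,(G',Z')}(m)$, whence $p^{\alpha_i}_{H,(G',Z')}(m)\le p^{\alpha_i}_{H,(E_2,V_2)}(m)$.

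The heart of the argument is (a), the $\alpha_i^+$-stability of the two pieces; throughout I would perturb $\alpha_i$ by a small positive rational scalar $\epsilon$ as in the proof of Proposition \ref{prop_valorcritico}, so that only the constant term of each reduced polynomial moves and the slope comparison stays frozen. For $(E_1,V_1)$, take a proper $0\neq(F,W)\subset(E_1,V_1)$. If $p^{\alpha_i}_{H,(F,W)}(m)<p^{\alpha_i}_{H,(E_1,V_1)}(m)$ strictly, the inequality persists for $\beta=\alpha_i+\epsilon$. If equality holds at $\alpha_i$, then $(F,W)\in\mathcal{S}$, so (c) gives $\frac{w}{n_F}\le\mu=\frac{v_1}{n_{E_1}}$; equality $\frac{w}{n_F}=\mu$ is excluded, because a proper full-rank subsheaf cannot achieve equality at $\alpha_i$ (its Hilbert polynomial drops strictly), forcing $n_F<n_{E_1}$ and thereby contradicting the minimality (d), so $\frac{w}{n_F}<\frac{v_1}{n_{E_1}}$ and Lemma \ref{lema6.1} gives $p^{\beta}_{H,(F,W)}(m)<p^{\beta}_{H,(E_1,V_1)}(m)$ for $\beta>\alpha_i$. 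For $(E_2,V_2)$, lift a proper $(G',Z')$ to $(F,W)$ as above; in the equality case the mediant identities $\frac{w}{n_F}=\frac{v_1+z'}{n_{E_1}+n_{G'}}$ and $\frac{v}{n_E}=\frac{v_1+v_2}{n_{E_1}+n_{E_2}}$, together with $\frac{w}{n_F}\le\mu$ and $\mu=\frac{v_1}{n_{E_1}}<\frac{v}{n_E}$ (Corollary \ref{monotony}), force $\frac{z'}{n_{G'}}\le\mu<\frac{v_2}{n_{E_2}}$, so Lemma \ref{lema6.1} again yields the strict inequality for $\beta>\alpha_i$, while the strict-at-$\alpha_i$ case persists as before. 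Finally the strict comparison in (a) follows by applying Lemma \ref{lema6.1} to $(E_1,V_1)\subset(E,V)$ (using $\frac{v_1}{n_{E_1}}<\frac{v}{n_E}$) to get $p^{\beta}_{H,(E_1,V_1)}(m)<p^{\beta}_{H,(E,V)}(m)$, and then Lemma \ref{additive}(1) to get $p^{\beta}_{H,(E,V)}(m)\le p^{\beta}_{H,(E_2,V_2)}(m)$, for $\beta>\alpha_i$.

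The step I expect to be the main obstacle is precisely this $\alpha_i^+$-stability verification: one must separate subsystems already strictly below $(E_i,V_i)$ at $\alpha_i$ (disposed of by persistence under a scalar perturbation) from those meeting $(E_i,V_i)$ at $\alpha_i$ (controlled by the extremality (c)--(d) and the mediant inequalities), and the delicate point is that minimality of $n_{E_1}$ genuinely excludes a proper subsystem of $(E_1,V_1)$ of slope exactly $\mu$, which relies on the observation that a proper full-rank subsheaf strictly lowers the Hilbert polynomial. The remaining ingredients---existence of the extremal $(E_1,V_1)$, the automatic saturation $V_1=V\cap H^0(X,E_1)$, and property (b)---are routine consequences of Lemmas \ref{additive} and \ref{lema6.1}.
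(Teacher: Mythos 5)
Your proposal is correct and follows essentially the same route as the paper: the same extremal construction (maximal section-slope $\mu$ among equal-polynomial proper subsystems, then minimal rank), the same saturation argument $V_1=V\cap H^0(X,E_1)$, part (b) via Lemma \ref{additive}, and part (a) via the pull-back/lifting of destabilizing subsystems combined with the mediant inequalities, Lemma \ref{lema6.1} and Corollary \ref{monotony}. The only difference is organizational: you verify $\alpha_i^+$-stability directly by a case split (strict versus equality at $\alpha_i$), whereas the paper argues by contradiction by extracting an equality destabilizer --- contrapositive formulations of the same argument --- and your explicit exclusion of a proper full-rank subsheaf attaining equality (since its Hilbert polynomial drops strictly) in fact makes precise a step the paper leaves implicit.
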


\begin{proof}
    By Proposition \ref{prop_valorcritico}, the coherent system $(E,V)$ is strictly $\alpha_i-$semistable. In particular, there exists  a proper subsystem $(E_1,V_1)\subsetneq(E,V)$ such that $$p_{H,(E,V)}^{\alpha_i}(m) = p_{H,(E_1,V_1)}^{\alpha_i}(m).$$

     Consider the non-empty set
    \[\mathcal{F}_1=\{(E_1,V_1) \subset (E,V) \thinspace|\thinspace p_{H,(E,V)}^\alpha(m) = p_{H,(E_1,V_1)}^\alpha(m) \}.\]
    Notice that, for any $(E_1,V_1) \in \mathcal{F}_1$, $n_{E_1} < n_E$ and $V_1 = V\cap H^0(X,E_1)$. In fact, if  $V_1$ is strictly contained in $W:= V\cap H^0(X,E_1)$, we can consider the subsystem $(E_1,W) \subset (E,V)$ which satisfies $p^{\alpha_i}_{H,(E_1,W)}(m)>p^{\alpha_i}_{H,(E,V)}(m)$, but this contradicts the $\alpha_i$-semistability of $(E,V)$. Hence,  $V_1 = V\cap H^0(X,E_1)$.  
    
    Since $(E,V)$ is $\alpha_i^+-$stable,   by Corollary \ref{monotony}, for any $(E_1,V_1) \in \mathcal{F}_1$ we have $\frac{v_1}{n_{E_1}}< \frac{v}{n_E}$. Thus, since  
    the values for $\frac{v_1}{n_{E_1}}$ are limited by the constraints $0<n_{E_1}<n_E$ and $0\leq v_1\leq v$, we can define 
       \[\lambda_0=\max\left\{\frac{v_1}{n_{E_1}} \thinspace|\thinspace (E_1,V_1) \in \mathcal{F}_1\right\}\]
    and set 
    \[{\mathcal{F}_2}=\left\{(E_1,V_1) \in \mathcal{F}_1 \thinspace|\thinspace \frac{v_1}{n_{E_1}}=\lambda_0\right\}.\]

Let $(E_1,V_1)$ be a coherent subsystem in $\mathcal{F}_2$. Since $V_1=V\cap H^0(X,E_1)$, we can consider the exact sequence 
\begin{equation}
    \label{seq_cs}
    0\rightarrow (E_1,V_1)\rightarrow (E,V)\rightarrow (E_2,V_2)\rightarrow0
\end{equation}
for some coherent system $(E_2,V_2)$ with $E_2$ torsion-free (see Proposition \ref{torsionfree}). Moreover, since $p_{H,(E,V)}^{\alpha_i}(m) = p_{H,(E_1,V_1)}^{\alpha_i}(m)$ from $(\ref{seq_cs})$ we have 
 \begin{equation} \label{eq1} p_{H,(E,V)}^{\alpha_i}(m) = p_{H,(E_1,V_1)}^{\alpha_i}(m)=p_{H,(E_2,V_2)}^{\alpha_i}(m). \end{equation} Therefore since $(E,V)$ is $\alpha_i$-semistable,  $(E_1,V_1)$ and $(E_2,V_2)$ are both $\alpha_i$-semistable.

Let us now prove  that $(E_2,V_2)$ is $\alpha^+_i$-stable. Suppose it is not.
In this case, by Proposition \ref{prop_valorcritico}, $(E_2,V_2)$ must be strictly $\alpha_i$-semistable and in particular  there exists a proper subsystem $(E'_2,V'_2)\subset (E_2,V_2)$  of type $(n_{E'_2},c_1(E'_2),c_2(E'_2),v'_2)$ such that 
\begin{equation} \label{eq2}
    p_{H,(E'_2,V'_2)}^{\alpha_i}(m) = p_{H,(E_2,V_2)}^{\alpha_i}(m). 
\end{equation} 
 Moreover, by Corollary \ref{monotony}, $\frac{v'_2}{n_{E'_2}} \geq \frac{v_2}{n_{E_2}}$.

 Therefore, there exists $(E_2',V_2') \subset (E_2,V_2)$ such that
 \begin{equation}
 \label{eq3}
 p_{H,(E'_2,V'_2)}^{\alpha_i}(m) = p_{H,(E_2,V_2)}^{\alpha_i}(m)  \quad \mbox{and} \quad  \frac{v'_2}{n_{E'_2}} \geq \frac{v_2}{n_{E_2}}.
 \end{equation}
  
    Consider now the subsystem $(E',V')\subset (E,V)$ defined by the pull-back diagram
\begin{equation}
    \label{seq_cs2}
0\rightarrow(E_1,V_1)\rightarrow(E',V')\rightarrow(E'_2,V'_2)\rightarrow 0.
\end{equation}
From $(\ref{eq1})$ and $(\ref{eq3})$  we have 
\[p_{H,(E'_2,V'_2)}^{\alpha_i}(m) = p_{H,(E_2,V_2)}^{\alpha_i}(m)=p_{H,(E_1,V_1)}^{\alpha_i}(m).\]
Therefore, using $(\ref{seq_cs2})$ we obtain
$$p_{H,(E',V')}^{\alpha_i}(m) = p_{H,(E_1,V_1)}^{\alpha_i}(m).$$ Finally,  since $p_{H,(E,V)}^{\alpha_i}(m) = p_{H,(E_1,V_1)}^{\alpha_i}(m)$, we get that $p_{H,(E',V')}^{\alpha_i}(m) = p_{H,(E,V)}^{\alpha_i}(m)$.

Hence $(E',V')\in \mathcal{F}_1$ and since $(E_1,V_1)\in \mathcal{F}_2$, by definition of $\mathcal{F}_2$ and the exact sequence $(\ref{seq_cs2})$ we get
$$\frac{v_1+v_2'}{n_{E_1}+n_{E'_2}}=\frac{v'}{n_{E'}}\leq\frac{v_1}{n_{E_1}}$$ which is equivalent to $\frac{v'_2}{n_{E'_2}}\leq \frac{v_1}{n_{E_1}}$. On the other hand, we have seen that $\frac{v_1}{n_{E_1}}<\frac{v_2}{n_{E_2}}$. Putting altogether we get
$$\frac{v'_2}{n_{E'_2}}\leq \frac{v_1}{n_{E_1}} <\frac{v_2}{n_{E_2}},$$
which contradicts $(\ref{eq3})$. Therefore, $(E_2,V_2)$ is $\alpha^+_i$-stable. 

Let us now consider $(E_1,V_1)\in \mathcal{F}_2$ with minimum rank in $\mathcal{F}_2$. Let us prove that $(E_1,V_1)$ is $\alpha^+_i$-stable. If not, by the same argument than before, there exists a proper subsystem $(E'_1,V'_1)\subset (E_1,V_1)$ such that 
    \[ p_{H,(E'_1,V'_1)}^{\alpha_i}(m) = p_{H,(E_1,V_1)}^{\alpha_i}(m) \quad \mbox{and} \quad  \frac{v'_1}{n_{E'_1}}\geq\frac{v_1}{n_{E_1}}. \]
   Since $v'_1\leq v_1$ and $\frac{v'_1}{n_{E'_1}}\geq\frac{v_1}{n_{E_1}}$, we get that $n_{E'_1}<n_{E_1}$, which contradicts the minimality of $n_{E_1}$. Therefore, $(E_1,V_1)$ is $\alpha_i^+$-stable.

   Finally, notice that since $(E,V)$ is $\alpha^+_i$-stable, by (\ref{seq_cs}) we must have 
   $$p_{H,(E_1,V_1)}^{\alpha_i^+}(m) < p_{H,(E,V)}^{\alpha_i^+}(m)<p_{H,(E_2,V_2)}^{\alpha_i^+}(m).$$
\end{proof}

\section{Clifford's Theorem for Coherent Systems}

In  this section, pushing forward the ideas contained in \cite{Lange-Newstead} for the cases of curves,  we establish the Clifford's Theorem for coherent systems on surfaces. One of the ingredients that we use is  the following reformulation of a generalization
of Clifford’s Theorem for vector bundles on surfaces  \cite[Theorem 4.1]{Costa}.

\begin{Theorem}\label{Costa-Miro-Roig}
    Let $X$ be a smooth algebraic surface,  $H$ an ample divisor on $X$ such that $K_X\cdot H\leq0$ and let $E$ be a rank $n\geq1$ semistable  vector bundle on $X$. Let $a$ be an integer such that
 \[\deg(X)\cdot\max\{\frac{n^2-1}{4},1\} < \frac{\binom{a+2}{2}-a-1}{a}.\]   If  $$0\leq \frac{c_1(E)H}{n}<aH^2+K_XH,$$ then \[h^0(E)\leq n+a\frac{c_1(E)H}{2}. \]
\end{Theorem}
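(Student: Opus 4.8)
The plan is to reduce the surface statement to the curve case by restricting $E$ to a general curve in $|aH|$ and then applying the higher-rank Clifford theorem on that curve. Fix a general smooth irreducible curve $C \in |aH|$ and tensor the structure sequence of $C$ with the locally free sheaf $E$ to obtain
\[0 \longrightarrow E(-aH) \longrightarrow E \longrightarrow E|_C \longrightarrow 0.\]
First I would show $h^0(E(-aH))=0$: since $E$ is semistable, so is $E(-aH)$, and its slope $\frac{c_1(E)H}{n}-aH^2$ is $<K_XH\le 0$ by the hypothesis $\frac{c_1(E)H}{n}<aH^2+K_XH$. A semistable sheaf of negative slope has no nonzero sections, as any such section would produce a rank-one subsheaf of slope $\ge 0$. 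Taking cohomology in the sequence then gives $h^0(E)\le h^0(E|_C)$.

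The decisive point is that $E|_C$ is again semistable, and this is exactly where the numerical condition on $a$ is used: it is a reformulation of the hypothesis of Flenner's effective restriction theorem, which guarantees that the restriction of a semistable sheaf on $X$ to a general member of $|aH|$ remains semistable as soon as $\deg(X)\max\{\frac{n^2-1}{4},1\}<\frac{\binom{a+2}{2}-a-1}{a}$. I would invoke that theorem to conclude the semistability of $E|_C$ (this is the route underlying \cite[Theorem 4.1]{Costa}). I expect this to be the main obstacle: one must match the stated inequality with Flenner's criterion and secure the genericity and smoothness of $C$ via Bertini, after checking that $aH$ is positive enough in the admissible range of $a$.

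Finally I would carry out a computation on $C$. By adjunction its genus satisfies $2g_C-2 = aH\cdot(aH+K_X)$, while $E|_C$ has rank $n$ and degree $\deg_C(E|_C)=c_1(E)\cdot aH = a\,c_1(E)H\ge 0$. A direct estimate using $\frac{c_1(E)H}{n}<aH^2+K_XH$ shows $\deg_C(E|_C)<2n\,g_C$, so $E|_C$ falls in the first range of Clifford's theorem for semistable bundles on curves, giving
\[h^0(E|_C)\le \frac{\deg_C(E|_C)}{2}+n = n+a\frac{c_1(E)H}{2}\]
(the branch $k\le d/2+n$ of \cite[Theorem 2.1]{Lange-Newstead}). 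Combining with $h^0(E)\le h^0(E|_C)$ yields the claimed inequality. The vanishing and the curve computation are routine once the semistability of $E|_C$ has been established, so essentially all the work is concentrated in the restriction step.
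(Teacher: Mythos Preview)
Your outline is correct and is precisely the argument underlying \cite[Theorem~4.1]{Costa}: restrict to a general smooth $C\in|aH|$, kill $H^0(E(-aH))$ by the slope inequality, invoke Flenner's restriction theorem (which is what the numerical hypothesis on $a$ encodes) to keep semistability on $C$, and finish with Clifford for semistable bundles on the curve. The paper's own proof is literally a one-line citation of that result together with the remark that the case $n=1$ is already contained in the same argument, so you have reconstructed exactly what is being quoted; the only cosmetic point is that on $C$ you should appeal to the vector-bundle Clifford inequality (e.g.\ Brambila-Paz--Grzegorczyk--Newstead) rather than the coherent-systems version, and you should record that the numerics force $aH^2+K_XH\ge 1$ and $a\ge 2$, hence $g_C\ge 2$.
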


\begin{proof} It is enough to observe that the proof of  \cite[Theorem 4.1]{Costa} also includes the case $n=1$. 
\end{proof}

 \begin{Corollary}
 \label{cor_clif}
    Let $X$ be a smooth algebraic surface, $H$ an ample divisor on $X$ such that $K_X\cdot H\leq0$ and let $E$ be a rank $n\geq1$ semistable  torsion-free sheaf on $X$. Let $a$ be an integer such that
 \[\deg(X)\cdot\max\{\frac{n^2-1}{4},1\} < \frac{\binom{a+2}{2}-a-1}{a}.\]   If  $$0\leq \frac{c_1(E)H}{n}<aH^2+K_XH,$$ then \[h^0(E)\leq n+a\frac{c_1(E)H}{2}. \]
\end{Corollary}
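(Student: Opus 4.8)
The plan is to reduce the statement to the already established vector bundle case (Theorem \ref{Costa-Miro-Roig}) by passing to the double dual. Since $X$ is a smooth surface, the reflexive hull $E^{**}$ of a torsion-free sheaf $E$ is locally free; thus $E^{**}$ is a vector bundle of the same rank $n$ as $E$. The canonical evaluation map $E \to E^{**}$ is injective and fits in an exact sequence
\[0 \to E \to E^{**} \to T \to 0,\]
where $T := E^{**}/E$ is a torsion sheaf supported on finitely many points, hence of dimension $0$. First I would record the numerical consequences: because $T$ has dimension $0$ we have $c_1(T)=0$, so $c_1(E^{**}) = c_1(E)$ and $\rank(E^{**}) = \rank(E) = n$. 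Taking global sections in the exact sequence gives an inclusion $H^0(X,E) \hookrightarrow H^0(X,E^{**})$, whence $h^0(E) \leq h^0(E^{**})$.

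The key step, and the one requiring care, is that semistability is inherited by $E^{**}$. I would verify this at the level of slopes $\mu(F):= c_1(F)H/\rank(F)$; note that $\mu(E^{**}) = \mu(E)$ since $c_1$ and the rank are preserved. Suppose for contradiction that some subsheaf $G \subsetneq E^{**}$ satisfied $\mu(G) > \mu(E^{**})$. Put $F := G \cap E$, the intersection taken inside $E^{**}$. Then $G/F$ embeds into $E^{**}/E = T$, so $G/F$ also has dimension $0$; consequently $\rank(F) = \rank(G)$ and $c_1(F) = c_1(G)$, giving $\mu(F) = \mu(G) > \mu(E)$. Since $F \subset E$, this contradicts the semistability of $E$, and therefore $E^{**}$ is semistable. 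If the relevant notion is Gieseker semistability, the same intersection argument applies after comparing reduced Hilbert polynomials, using that $\chi(E^{**}) = \chi(E) + \operatorname{length}(T)$.

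Finally I would apply Theorem \ref{Costa-Miro-Roig} to the semistable vector bundle $E^{**}$. The constraint on $a$ involves only $\deg(X)=H^2$ and $n$, and the numerical hypothesis $0 \leq c_1(E^{**})H/n < aH^2 + K_XH$ coincides with the one assumed for $E$ because $c_1(E^{**})H = c_1(E)H$; hence both hypotheses hold for $E^{**}$. The theorem then yields
\[h^0(E^{**}) \leq n + a\,\frac{c_1(E^{**})H}{2} = n + a\,\frac{c_1(E)H}{2},\]
and combining this with $h^0(E) \leq h^0(E^{**})$ gives the asserted bound. The only genuinely nontrivial ingredient is the preservation of semistability under the reflexive hull; the remainder is bookkeeping with the invariants of the dimension-zero quotient $T$.
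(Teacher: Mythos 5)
Your proof is correct and takes essentially the same route as the paper: pass to the double dual $E^{**}$, which is locally free since $X$ is a smooth surface, use the injection $E\hookrightarrow E^{**}$ (valid exactly because $E$ is torsion-free) to get $h^0(E)\le h^0(E^{**})$, and apply Theorem \ref{Costa-Miro-Roig}; the paper's own proof is just a terser version of this, leaving the preservation of rank, $c_1\cdot H$ and semistability implicit, so your write-up supplies bookkeeping that the paper omits. The one flaw is your parenthetical claim that the same intersection argument yields \emph{Gieseker} semistability of $E^{**}$: that is false in general, since with $F=G\cap E$ one has $p_F=p_G-\operatorname{length}(G/F)/\rank(G)$ while $p_E=p_{E^{**}}-\operatorname{length}(T)/n$, and when $\rank(G)<n$ the correction term for $F$ can be the larger one, so $p_G>p_{E^{**}}$ does not force $p_F>p_E$ (indeed there exist Gieseker semistable torsion-free sheaves on $\mathbb{P}^2$ whose double dual is Gieseker unstable). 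This does not damage your argument, because Gieseker semistability implies slope semistability and the slope-level statement you verify is all that is needed to apply the theorem to $E^{**}$.
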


\begin{proof}
    For any coherent sheaf $E$ on $X$ there is a natural homomorphism $\mu:E \to  (E^*)^*$ of $E$ into its double dual $(E^*)^*$. This morphism is injective if and only if $E$ is torsion-free. Moreover, since $X$ is a surface, $(E^*)^*$ is locally free. Therefore, $$h^0(E)\leq h^0(E^*)^*\leq n+\frac{ac_1(E)H}{2},$$ where the last inequality follows from Theorem \ref{Costa-Miro-Roig}.   
\end{proof}

Now we are ready to prove the main result of this section

\begin{Theorem} \label{Cliffordtorsionfree}
Let $X$ be a smooth projective surface and $H$ an ample divisor on $X$ such that $K_X\cdot H\leq0$.
    Let $(E,V)$ be a coherent system on $X$ of type $(n_E,c_1,c_2,v)$ with $v>0$ which is $\alpha-$semistable for some $\alpha \in \mathbb{Q}[m]_{>0}$. Let $a$ be an integer such that
 \[\deg(X)\cdot\max\{\frac{n^2_E-1}{4},1\}<\frac{\binom{a+2}{2}-a-1}{a}.\]  If $$0\leq \frac{c_1 H}{n_E}<aH^2+K_X H,$$  
then 
    \[v \leq n_E+a\cdot \frac{c_1 H}{2}.\]
   
\end{Theorem}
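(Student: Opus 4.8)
The plan is to reduce the statement about coherent systems to the Clifford-type bound for semistable sheaves already recorded in Corollary \ref{cor_clif}. The key difficulty is that an $\alpha$-semistable coherent system $(E,V)$ need \emph{not} have $E$ Gieseker semistable; the parameter $\alpha$ can force instability of the underlying sheaf. So the bound of Corollary \ref{cor_clif} cannot be applied directly to $E$. The natural strategy is therefore an induction on the rank $n_E$, using the Jordan--Hölder / destabilization machinery developed in Section 2 to split $(E,V)$ into pieces whose underlying sheaves are better behaved, apply the Clifford bound to those pieces, and add up.

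First I would set up the induction. In the base step, or more precisely in the ``good'' case, I would treat the situation where $E$ is Gieseker $H$-semistable. Here I would argue that $v=\dim V\le h^0(X,E)$, and then invoke Corollary \ref{cor_clif} directly to get $v\le h^0(E)\le n_E+a\tfrac{c_1H}{2}$, which is exactly the desired inequality. For the inductive step I would handle the case where $E$ fails to be Gieseker semistable. The point is that, by Remark \ref{smallalpha}(2), failure of Gieseker semistability of $E$ is tied to the coherent system not being $0^+$-stable, and more generally $\alpha$-semistability at a positive critical value lets us destabilize. Using Proposition \ref{exactseq} (after possibly moving $\alpha$ to a critical value $\alpha_i$ via the finiteness of critical values recorded in the Notation following Proposition \ref{prop_valorcritico}), I would produce a short exact sequence of coherent systems
\[
0 \to (E_1,V_1) \to (E,V) \to (E_2,V_2) \to 0
\]
with each $(E_i,V_i)$ again $\alpha$-semistable, of strictly smaller rank, and with $E_i$ torsion-free. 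Additivity of ranks, first Chern classes and dimensions of the $V_i$ along this sequence ($n_E=n_{E_1}+n_{E_2}$, $c_1=c_1(E_1)+c_1(E_2)$, $v=v_1+v_2$) is what will let the bounds add.

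The crux of the argument is then to verify that the hypotheses of the Theorem are inherited by the subquotients $(E_1,V_1)$ and $(E_2,V_2)$, so that the induction hypothesis applies to each. Two things must be checked: that the \emph{same} integer $a$ still works, i.e.\ that $\deg(X)\cdot\max\{(n_{E_i}^2-1)/4,1\}<\binom{a+2}{2}-a-1)/a$ (this is immediate since $n_{E_i}<n_E$ makes the left-hand side no larger), and that the slope condition $0\le c_1(E_i)H/n_{E_i}<aH^2+K_XH$ holds for each piece. The first is essentially free. The second is where I expect the real obstacle to lie: controlling the slopes $c_1(E_i)H/n_{E_i}$ of the subquotients. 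Semistability of the coherent system gives comparisons of reduced Hilbert polynomials, and reading off the degree-one coefficients (via the explicit formula for $P_{H,E}(m)/n$ recorded after the Definition of the reduced Hilbert polynomial) translates these into inequalities among the slopes $c_1(E_i)H/n_{E_i}$ and $c_1H/n_E$; combined with Lemma \ref{Monomorphism} and the nonnegativity $c_1(E_i)H/n_{E_i}\ge 0$, I would bound each subquotient slope below by $0$ and above by $aH^2+K_XH$. Once both pieces satisfy the hypotheses, the induction hypothesis gives $v_i\le n_{E_i}+a\,c_1(E_i)H/2$, and summing over $i=1,2$ yields
\[
v=v_1+v_2\le (n_{E_1}+n_{E_2})+a\cdot\frac{(c_1(E_1)+c_1(E_2))H}{2}=n_E+a\cdot\frac{c_1H}{2},
\]
completing the induction. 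The one subtlety to be careful about is the edge case where a subquotient has $v_i=0$ (so the Clifford hypothesis $v>0$ fails for that piece); there the inequality $v_i=0\le n_{E_i}+a\,c_1(E_i)H/2$ must be checked directly from the slope bound $c_1(E_i)H\ge 0$, which is exactly what the hypothesis $0\le c_1(E_i)H/n_{E_i}$ provides.
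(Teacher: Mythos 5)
Your overall strategy---induction on the rank, disposing of the $0^+$-stable case via Remark \ref{smallalpha} and Corollary \ref{cor_clif}, and otherwise splitting $(E,V)$ by Proposition \ref{exactseq} into an exact sequence $0 \to (E_1,V_1) \to (E,V) \to (E_2,V_2) \to 0$ of $\alpha$-semistable pieces and adding the resulting bounds---is the same as the paper's. But there is a genuine gap at precisely the step you flag as ``the real obstacle'': you propose to verify the slope hypothesis $0\le c_1(E_i)H/n_{E_i}<aH^2+K_XH$ for \emph{both} pieces and then apply the induction hypothesis to each. For the sub-piece $(E_1,V_1)$ this is impossible in general: since $p^{\alpha}_{H,(E_1,V_1)}(m)=p^{\alpha}_{H,(E,V)}(m)$ and Corollary \ref{monotony} gives $\frac{v_1}{n_{E_1}}\le\frac{v}{n_E}$, the equality of $\alpha$-reduced Hilbert polynomials forces $\frac{P_{H,E_1}(m)}{n_{E_1}}\ge\frac{P_{H,E}(m)}{n_E}$, and hence
\[
\frac{c_1(E_1)H}{n_{E_1}}\;\ge\;\frac{c_1 H}{n_E},
\]
i.e.\ the slope of the subsystem is \emph{at least} that of $E$, and nothing in the hypotheses prevents it from exceeding $aH^2+K_XH$. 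The $\alpha$-semistability trades a smaller section density $v_1/n_{E_1}$ against a larger sheaf slope, so the inequality you need goes the wrong way; Lemma \ref{Monomorphism} and nonnegativity cannot repair this.

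The paper's proof is asymmetric exactly to dodge this point. The induction hypothesis is applied \emph{only} to the quotient $(E_2,V_2)$: there $\frac{v_2}{n_{E_2}}\ge\frac{v}{n_E}$, whence $\frac{c_1(E_2)H}{n_{E_2}}\le\frac{c_1H}{n_E}<aH^2+K_XH$, while $c_1(E_2)H\ge0$ follows from the $\alpha$-semistability of $(E_2,V_2)$, and $v_2>0$ since $v_1/n_{E_1}\le v_2/n_{E_2}$ and $v>0$; so the quotient genuinely inherits all hypotheses. The resulting bound $\frac{v_2}{n_{E_2}}\le 1+\frac{a}{2}\cdot\frac{c_1(E_2)H}{n_{E_2}}$ is then \emph{transferred} to the sub-piece using the two inequalities of the paper's Claim 1, namely $\frac{v_1}{n_{E_1}}\le\frac{v_2}{n_{E_2}}$ and $\frac{c_1(E_1)H}{n_{E_1}}\ge\frac{c_1(E_2)H}{n_{E_2}}$, giving $\frac{v_1}{n_{E_1}}\le 1+\frac{a}{2}\cdot\frac{c_1(E_1)H}{n_{E_1}}$, i.e.\ $v_1\le n_{E_1}+a\,\frac{c_1(E_1)H}{2}$, without ever invoking induction on $(E_1,V_1)$. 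Summing the two bounds and using additivity of $n$, $c_1$ and $v$ finishes the proof; to fix your write-up, replace ``apply the induction hypothesis to each'' by this one-sided induction plus transfer.
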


\begin{proof}
The proof is by induction on $n=n_E$.  For the case $n=1$, let $(L\otimes I_Z, V)$ be a coherent system of type $(1,L,|Z|,v)$ which is $\alpha-$semistable for some $\alpha \in \mathbb{Q}[m]_{>0}$. 
Let us take $a$ an integer such that $$\deg(X) < \frac{\binom{a+2}{2}-a-1}{a}.$$ 
Since $L\otimes I_Z$ is a rank $1$ torsion-free-sheaf, we have that  $(L\otimes I_Z, V)$ is $H$-semistable.  Therefore, from 
Corollary \ref{cor_clif}, it follows that if $0\leq c_1(E)H<aH^2+K_XH$ we get

    \[v \leq h^0(L\otimes I_Z) \leq 1+a\frac{LH}{2}. \]

Assume $n\geq 2$  and the theorem is proved for coherent systems of rank less than $n$.

Let $(E,V)$ be a coherent system of type $(n,c_1,c_2,v)$ which is $\alpha-$semistable for some $\alpha \in \mathbb{Q}[m]_{>0}$.
Let $a$ be an integer such that $$\deg(X)\cdot\max\{\frac{n^2-1}{4},1\}<\frac{\binom{a+2}{2}-a-1}{a}$$ 
and  assume that $0\leq \frac{c_1(E)\cdot H}{n}<aH^2+K_X H.$

First, we consider the case in which $(E,V)$ is $0^{+}$-stable. This implies that $E$ is $H$-semistable (see Remark \ref{smallalpha}) and hence by Corollary \ref{cor_clif}  we have
    \[v\leq h^0(E)\leq n+a\frac{c_1(E)H}{2}. \]

Let us now assume that $(E,V)$ is not $0^{+}$-stable. Then, by Proposition \ref{prop_valorcritico}, $(E,V)$ is strictly $\alpha$-semistable for some $\alpha>0$. Moreover, by Proposition \ref{exactseq}, $(E,V)$ sits on the exact sequence
 \[0 \to (E_1,V_1) \to (E,V) \to (E_2,V_2) \to 0,\]
        where
     $(E_1,V_1)$ and $(E_2,V_2)$ are $\alpha-$semistable coherent systems of type $(n_{E_i}, c_1(E_i), c_2(E_i),v_i)$ with $p_{H,(E_i,V_i)}^\alpha(m) = p_{H,(E,V)}^\alpha(m)$, for $i=1,2$.

\noindent \textbf{Claim 1:} $\frac{v_1}{n_{E_1}}\leq \frac{v_2}{n_{E_2}}$ and $\frac{c_1(E_1)\cdot H}{n_{E_1}}\geq \frac{c_1(E_2)\cdot H}{n_{E_2}} $.

\noindent \textbf{Proof of Claim 1:}
Since $p_{H,(E_1,V_1)}^\alpha(m) = p_{H,(E,V)}^\alpha(m)$,   by Corollary \ref{monotony} we get 
\begin{equation}
\frac{v_1}{n_{E_1}}\leq \frac{v}{n}.
    \label{cota1}
\end{equation}
Since $v_1+v_2=v$ and $n_{E_1}+n_{E_2}=n$, this implies   
\begin{equation}
    \label{cota2}
\frac{v_1}{n_{E_1}}\leq \frac{v_2}{n_{E_2}}.
\end{equation}

Since $$\frac{v_1}{n_{E_1}}\leq \frac{v_2}{n_{E_2}}\thinspace \thinspace \mbox{and }\thinspace\thinspace p_{H,(E_1,V_1)}^\alpha(m) = p_{H,(E_2,V_2)}^\alpha(m),$$ we have $$\frac{P_{H,E_1}(m)}{n_{E_1}}\geq\frac{P_{H,E_2}(m)}{n_{E_2}},$$  which implies  $$\frac{c_1(E_1)H}{n_{E_1}}\geq \frac{c_1(E_2) H}{n_{E_2}}. $$

\noindent\textbf{Claim 2: $v_2\leq r_2+a\frac{c_1(E_2)\cdot H}{2}.$}  

\noindent \textbf{Proof of Claim 2:}

By (\ref{cota1}) and the fact that $v=v_1+v_2$, we get 
\begin{equation}
    \label{cota3}
    \frac{v_2}{n_{E_2}}\geq \frac{v}{n}.
\end{equation}

Since $p_{H,(E_2,V_2)}^\alpha(m) = p_{H,(E,V)}^\alpha(m),$ the bound (\ref{cota3}) implies that $$\frac{P_{H,E}(m)}{r}\geq\frac{P_{H,E_2}(m)}{r_2}$$ and hence $$\frac{c_1(E) H}{r}\geq \frac{c_1(E_2) H}{r_2}. $$

Moreover, since $(E_2,V_2)$ is $\alpha$-semistable, we have that $c_1(E_2) H\geq0$.

Putting altogether,
$$0\leq \frac{c_1(E_2) H}{n_{E_2}} \leq \frac{c_1(E)H}{r} <aH^2+K_X H,$$

where $$\deg(X)\cdot\max\{\frac{{n^2_{E_2}}-1}{4},1\} < \deg(X)\cdot\max\{\frac{n^2-1}{4},1\}< \frac{\binom{a+2}{2}-a-1}{a}.$$

Thus we are under assumption of induction hypothesis and hence $$v_2\leq n_{E_2}+a\frac{c_1(E_2) H}{2}.$$

By (\ref{cota1}), this implies that $v_1\leq n_{E_1}+a\frac{c_1(E_1) H}{2}$.

Putting altogether,
$$v=v_1+v_2\leq n_{E_1}+n_{E_2}+a\cdot\frac{(c_1(E_1)+c_1(E_2)) H}{2}=n+a\cdot\frac{c_1(E) H}{2}.$$
\end{proof}

\begin{Corollary}
    \label{cor_Clifford_CS}
Let $X$ be a smooth projective surface and $H$ an ample divisor on $X$ such that $K_X\cdot H\leq0$. 
    Let $(E,V)$ be a coherent system on $X$ of type $(n_E,c_1,c_2,v)$ with $v>0$ which is $\alpha-$semistable for some $\alpha \in \mathbb{Q}[m]_{>0}$. Set $a=\lceil \frac{(n^2_E-1)H^2}{2} \rceil$ or $a=2H^2$ if $n_E=1,2$.   If $$0\leq \frac{c_1 H}{n_E}<aH^2+K_X H,$$  
then 
    \[v \leq n_E+a\cdot \frac{c_1 H}{2}.\]
   
\end{Corollary}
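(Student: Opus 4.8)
The plan is to deduce this Corollary directly from Theorem~\ref{Cliffordtorsionfree} by verifying that the two prescribed values of $a$ fulfil the arithmetic hypothesis of that theorem; once this is confirmed, the bound follows with no further work. The first step is therefore to simplify the quantity on the right-hand side of that hypothesis. Writing $\binom{a+2}{2}=\frac{(a+2)(a+1)}{2}$, one computes
\[\binom{a+2}{2}-a-1=\frac{(a+2)(a+1)}{2}-(a+1)=\frac{a(a+1)}{2},\]
so that
\[\frac{\binom{a+2}{2}-a-1}{a}=\frac{a+1}{2}.\]
Recalling that $\deg(X)=H^2$, the hypothesis of Theorem~\ref{Cliffordtorsionfree} becomes the elementary inequality
\[H^2\cdot\max\left\{\frac{n_E^2-1}{4},1\right\}<\frac{a+1}{2}.\]

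Next I would split into two cases according to the rank. If $n_E\geq 3$, then $\frac{n_E^2-1}{4}\geq 2>1$, so the maximum equals $\frac{n_E^2-1}{4}$ and the displayed inequality is equivalent to $a>\frac{(n_E^2-1)H^2}{2}-1$. Taking $a=\lceil\frac{(n_E^2-1)H^2}{2}\rceil$ gives $a\geq\frac{(n_E^2-1)H^2}{2}>\frac{(n_E^2-1)H^2}{2}-1$, so the inequality holds. If $n_E=1$ or $n_E=2$, then $\frac{n_E^2-1}{4}\leq\frac{3}{4}<1$, so the maximum equals $1$ and the inequality reduces to $a>2H^2-1$; the choice $a=2H^2$ visibly satisfies this. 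In each case $a$ is a positive integer, since $H^2\geq 1$ for an ample divisor on a surface, so in particular the expression $\frac{\binom{a+2}{2}-a-1}{a}$ is well defined.

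With these choices the hypotheses of Theorem~\ref{Cliffordtorsionfree} are met, and the stated bound $v\leq n_E+a\cdot\frac{c_1H}{2}$ follows at once. There is essentially no obstacle here: the whole content is the simplification $\frac{\binom{a+2}{2}-a-1}{a}=\frac{a+1}{2}$ together with the observation that the ceiling (respectively the value $2H^2$) overshoots the threshold $\frac{(n_E^2-1)H^2}{2}-1$ (respectively $2H^2-1$). The only point requiring a little care is that the chosen $a$ must be a \emph{positive} integer for Theorem~\ref{Cliffordtorsionfree} to apply, which is precisely why the small-rank cases are treated separately rather than through the ceiling formula, the latter returning $a=0$ when $n_E=1$.
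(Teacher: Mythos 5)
Your proposal is correct and follows exactly the paper's route: the paper likewise deduces the Corollary by checking that the chosen values of $a$ satisfy the arithmetic hypothesis $\deg(X)\cdot\max\left\{\frac{n_E^2-1}{4},1\right\}<\frac{\binom{a+2}{2}-a-1}{a}$ of Theorem~\ref{Cliffordtorsionfree}, though the paper leaves the verification implicit while you carry it out (via the simplification $\frac{\binom{a+2}{2}-a-1}{a}=\frac{a+1}{2}$ and the two rank cases). Your added remarks on positivity of $a$ and on why small ranks need the separate value $2H^2$ are accurate refinements of what the paper states without proof.
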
 

\begin{proof}
    It  follows from Theorem \ref{Cliffordtorsionfree} since $a$ is the  integer satisfying  $$\deg(X)\cdot\max\{\frac{n^2_E-1}{4},1\} < \frac{\binom{a+2}{2}-a-1}{a}.$$
\end{proof}

\begin{Remark} \rm
The bound in  Corollary \ref{cor_Clifford_CS} is not necessarily sharp, at least for $0^+$-semistable coherent systems as it we can shown be means of the next example. 
\end{Remark}

\begin{Example} \rm  Let $X$ be a ruled surface over a nonsingular curve $C$ of genus $g\geq0$ and with invariant $e>0$. Denote by $C_0$ and $f$ the generators of the Picard group of $X$ such that any divisor $D$ is numerically equivalent to $aC_0+bf$ for some $a,b \in \ZZ$, where $C_0$ represents a section of $X \to C$ and $f$ a class of a fiber. Take $H \equiv C_0+\beta f$ with $\beta >>0$ an ample divisor on $X$ and  $c_2>>0$ an integer. In  \cite{Costa2}, the authors proved that for all rank 2 $H$-stable vector bundles $E$ on $X$ with $c_1(E)=f$ and $c_2(E)=c_2$, $$h^0(E) \leq 2.$$ 
Moreover, if $E$ is $H$-stable then  the pair $(E,V)$, with $V \subset H^0(E)$ of dimension 1, is $0^+$-semistable and satisfies
$$h^0(E) \leq 2 $$
which is clearly a better bound than in Corollary \ref{cor_Clifford_CS}. 
\end{Example}

\end{document}